\title{Free boundary regularity for a degenerate fully non-linear elliptic problem with right hand side}
\author{\it by \smallskip \\
Raimundo Leit\~ao\footnote{\noindent \textsc{Raimundo Leit\~ao}.
Universidade Federal do Cear\'{a} - UFC. Department of Mathematics. Fortaleza - CE, Brazil - 60455-760.
\texttt{E-mail address: rleitao@mat.ufc.br}
}
\,\,\, $\&$ 
 Gleydson C. Ricarte\footnote{\noindent \textsc{Gleydson Chaves Ricarte}.
Universidade Federal do Cear\'{a} - UFC. Department of Mathematics. Fortaleza - CE, Brazil - 60455-760.
\texttt{E-mail address: ricarte@mat.ufc.br}
}}
\newlength{\hchng}
\newlength{\vchng}
\def \R {\mathbb{R}}
\newtheorem{theorem}{Theorem}[section]
\newtheorem{lemma}[theorem]{Lemma}
\newtheorem{corollary}[theorem]{Corollary}
\theoremstyle{definition}
\newtheorem{definition}[theorem]{Definition}
\theoremstyle{remark}
\newtheorem{remark}[theorem]{Remark}
\numberwithin{equation}{section}
\newcommand{\intav}[1]{\mathchoice {\mathop{\vrule width 6pt height 3 pt depth  -2.5pt
\kern -8pt \intop}\nolimits_{\kern -6pt#1}} {\mathop{\vrule width
5pt height 3  pt depth -2.6pt \kern -6pt \intop}\nolimits_{#1}}
{\mathop{\vrule width 5pt height 3 pt depth -2.6pt \kern -6pt
\intop}\nolimits_{#1}} {\mathop{\vrule width 5pt height 3 pt depth
-2.6pt \kern -6pt \intop}\nolimits_{#1}}}
\begin{document}
\maketitle

\begin{abstract}
We consider an one-phase free boundary problem for a degenerate fully non-linear elliptic operators with non-zero right hand side. We use the approach present in \cite{DeSilva} to prove that flat free boundaries and Lipschitz free boundaries are $C^{1, \gamma}$.
\end{abstract}

\medskip

\noindent{\sc keywords}: free
boundary problems, degenerate fully non-linear elliptic operators, regularity theory.

\smallskip

\noindent{\sc MSC2000}: 35R35, 35J70,
35J75, 35J20.

\tableofcontents

\section{Introduction}

Given a bounded domain $\Omega \subset \mathbb{R}^n$ and $\mu \geq 0$ we consider the degenerate fully non-linear elliptic problem

\begin{eqnarray}
\label{P 5.1. introduc}
	\left \{ 
		\begin{array}{lll}
			\mathfrak{L}_{\mu}u =  f, & \text{ in } & \Omega_{+}\left( u \right),\\
			|\nabla u| = Q, & \text{ on } & \mathfrak{F}(u),
		\end{array}
	\right.
\end{eqnarray}
where $\mathfrak{L}_{\mu}u := \vert \nabla u \vert^{\mu} \Delta u$, $Q \geq 0$ is a $C^{0,\alpha}$-continuous function, $f \in L^{\infty}\left( \Omega \right) \cap C\left(  \Omega \right)$ and
$$
	\Omega^{+}(u):= \{x \in \Omega : u(x) >0\} \quad \textrm{and} \quad \mathfrak{F}(u) := \partial \Omega^{+}(u) \cap \Omega.
$$

The study of the regularity of the free boundary $\mathfrak{F}(u)$, to the problem \eqref{P 5.1. introduc} has a large literature:

\begin{enumerate}
\item \textit{Non-degenerate.}  The case $\mu = f = 0$, was studied in the seminal works of Caffarelli: \cite{AC}, \cite{C1}, \cite{C2}. In the context of fully non-linear elliptic equations, the homogeneuos problem $f=0$ was addressed in \cite{Fe1}, \cite{Fe2}, \cite{W1}, \cite{W2}, \cite{Fel1}, \cite{Fel2} . The non-homogeneous case $f\neq 0$ was studied in  \cite{DeSilva} and \cite{DFS1}. 
\item \textit{Degenerate.} For $\mu > 0$, there are not results about problem \eqref{P 5.1. introduc}.
\end{enumerate} 



In this paper we will develop the regularity theory of $\mathfrak{F}(u)$. Precisely, we will apply the technique presented in \cite{DeSilva} to prove that flat free boundaries are $C^{1, \gamma}$ (see section 2 for the definition of viscosity solutions): 

\begin{theorem} 
\label{th 5.1. introd.}
Let $u$ be a viscosity solution to (\ref{P 5.1. introduc}) in ball $B_{1}\left( 0\right)$. Suppose that $0 \in \mathfrak{F}\left( u \right) $ and $Q\left( 0 \right)= 1$. There exists a universal constant $\tilde{\varepsilon} > 0$ such that, if the graph of $u$ is $\tilde{\varepsilon}$-flat in $B_{1}\left( 0\right)$, i.e.  
\begin{eqnarray*}
\label{t 5.1.1. introd.}
\left( x_{n} - \tilde{\varepsilon}\right)^{+} \leq u \left( x \right) \leq \left( x_{n} + \tilde{\varepsilon} \right)^{+} \ \ \mbox{for} \ \ x \in B_{1}\left( 0 \right),
\end{eqnarray*} 
and
\begin{eqnarray*}
\label{t 5.1.2. introd.}
\Vert f \Vert_{L^{\infty}\left( B_{1}\left( 0\right) \right) } \leq \tilde{\varepsilon}, \ \ \left[ Q \right]_{C^{0,\alpha}\left( B_{1}\left( 0\right) \right)} \leq \tilde{\varepsilon},
\end{eqnarray*}
then $\mathfrak{F}\left( u \right)$ is $C^{1,\beta}$ in $B_{\frac{1}{2}}\left( 0 \right)$.
\end{theorem}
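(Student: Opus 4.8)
The plan is to follow the De Silva iteration scheme adapted to the degenerate operator $\mathfrak{L}_\mu$. The heart of the argument is an \emph{improvement of flatness} lemma: there exist universal constants $\varepsilon_0>0$ and $0<\rho<1/2$ such that if $u$ solves \eqref{P 5.1. introduc} in $B_1$, $0\in\mathfrak{F}(u)$, $Q(0)=1$, and $u$ is $\varepsilon$-flat in $B_1$ in the direction $e_n$ with $\varepsilon\le\varepsilon_0$ and $\|f\|_{L^\infty},[Q]_{C^{0,\alpha}}\le\varepsilon^2$ (or some comparable smallness coupled to the flatness), then $u$ is $(\rho\varepsilon/2)$-flat in $B_\rho$ in some new unit direction $\nu$, with $|\nu-e_n|\le C\varepsilon$. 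Granting this, Theorem~\ref{th 5.1. introd.} follows by the usual iteration: rescaling $u_k(x)=u(\rho^k x)/\rho^k$, one checks the hypotheses are reproduced at each scale (here the scaling of $\mathfrak{L}_\mu$ matters: $\mathfrak{L}_\mu u_k(x)=\rho^k\,(\mathfrak{L}_\mu u)(\rho^k x)$, so the right-hand side improves, and the $C^{0,\alpha}$ seminorm of the rescaled $Q$ picks up a favorable factor $\rho^{k\alpha}$), obtaining a geometrically converging sequence of directions $\nu_k\to\nu_\infty$ with $|\nu_{k+1}-\nu_k|\le C\rho^k\varepsilon$, which forces $\mathfrak{F}(u)$ to be differentiable at $0$ with a Hölder modulus; translating the argument to every free boundary point in $B_{1/2}$ gives $C^{1,\beta}$.

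To prove the improvement of flatness, I would argue by compactness/contradiction. Suppose it fails along a sequence $u_k$ that is $\varepsilon_k$-flat with $\varepsilon_k\to0$. Define the normalized, ``flattened'' functions
\[
\tilde u_k(x)=\frac{u_k(x)-(x_n)^+}{\varepsilon_k},\qquad x\in B_1\cap\{\text{near the free boundary}\},
\]
and show that (a suitable graph-version of) $\tilde u_k$ has uniformly bounded oscillation and is equicontinuous up to the free boundary. The key analytic input is a Harnack-type inequality for $u$ in its positivity set, proved via viscosity comparison: because $|\nabla u|$ is close to $1$ near $\mathfrak{F}(u)$, the operator $\mathfrak{L}_\mu u=|\nabla u|^\mu\Delta u$ is \emph{uniformly elliptic} there, with ellipticity constants depending only on $\mu$, so the standard Krylov–Safonov / De Silva Harnack machinery applies to $u$ minus a plane after dividing out $\varepsilon_k$; the right-hand side contributes an error controlled by $\|f\|_{L^\infty}/\varepsilon_k$, which is why one couples $\|f\|_{L^\infty}\lesssim\varepsilon^2$. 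Passing to the limit with Arzelà–Ascoli, $\tilde u_k\to\tilde u_\infty$ locally uniformly, and one identifies $\tilde u_\infty$ as a viscosity solution of a \emph{linearized} free boundary problem: $\Delta\tilde u_\infty=0$ in $B_{1/2}^+:=B_{1/2}\cap\{x_n>0\}$ with the Neumann-type condition $\partial_{x_n}\tilde u_\infty=0$ on $\{x_n=0\}$ (the degeneracy parameter $\mu$ disappears in the limit, since $|\nabla u_k|\to1$ on $\mathfrak{F}(u_k)$). Classical regularity for this limiting problem gives a $C^{1,\beta}$ estimate, i.e. $\tilde u_\infty$ is within $\rho^{1+\beta}\cdot(\text{small})$ of a linear function $\ell(x)=a+\nu'\cdot x'$ in $B_\rho$; transferring this back to $u_k$ for large $k$ contradicts the assumed failure of the improvement.

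The main obstacle, and the place where the degeneracy $\mu>0$ genuinely intervenes, is establishing the Harnack inequality and the uniform continuity estimate for $\tilde u_k$ \emph{up to the free boundary} without an a priori gradient bound: away from $\mathfrak F(u)$ the operator can degenerate where $\nabla u=0$, so one must either (i) restrict attention to a neighborhood of the free boundary where flatness guarantees $|\nabla u|\asymp 1$ and carry out all comparisons there, using barriers of the form $c\,(x_n+\varepsilon\,\text{(cutoff)})^+$ whose gradients are comparable to $1$, or (ii) invoke a $C^{1,\alpha}$ interior estimate for $\mathfrak{L}_\mu$-type degenerate equations (Imbert–Silvestre type) to get the needed regularity. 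I expect route (i) to be cleaner: one shows $\{u>0\}$ has positive Lebesgue density and $u$ grows linearly away from $\mathfrak F(u)$ (non-degeneracy), so the relevant estimates only ever use the operator in the uniformly elliptic regime. The construction of the sub/supersolution barriers must be done with $\mathfrak{L}_\mu$ rather than $\Delta$, but since on the barrier's support $|\nabla|$ is bounded above and below, $\mathfrak{L}_\mu$ applied to the barrier is comparable to $\Delta$ applied to it up to the constant $|\nabla|^\mu$, so the classical De Silva barriers go through after adjusting constants by a factor depending only on $\mu$. Everything else — the iteration, the passage from flatness to $C^{1,\beta}$, and the final covering argument — is routine once the improvement of flatness is in hand.
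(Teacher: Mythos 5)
Your overall skeleton --- improvement of flatness proved by compactness, identification of the limit with the Neumann problem $\Delta u_{\infty}=0$ in $B^{+}$, $\partial_{n}u_{\infty}=0$ on $\{x_n=0\}$, and the geometric iteration of rescalings $u(\rho^{k}x)/\rho^{k}$ with a Cauchy sequence of directions --- is exactly the paper's route, and your remarks on how $f$ and $[Q]_{C^{0,\alpha}}$ rescale are correct. The gap is in the step you yourself identify as the crux: the Harnack/oscillation estimate for $(u-x_n)/\varepsilon$ feeding the compactness argument. Your route (i) rests on the claim that $\varepsilon$-flatness forces $|\nabla u|\asymp 1$ near $\mathfrak{F}(u)$, so that $\mathfrak{L}_{\mu}$ is uniformly elliptic where you need it; this is not justified. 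Being trapped between $(x_n-\varepsilon)^{+}$ and $(x_n+\varepsilon)^{+}$ gives no pointwise lower bound on $|\nabla u|$, and the substitute you invoke (Lipschitz continuity and linear nondegeneracy of $u$) is not available at this stage --- in the paper those properties appear only in the Lipschitz-free-boundary theorem, and there the authors restrict to $n=2$ precisely because the needed Harnack machinery is lacking. Moreover, the Harnack inequality that is actually available for $v=u-x_n$, which solves $|\nabla v+e_n|^{\mu}\Delta v=f$ (Imbert), is elliptic only where the gradient is large and therefore carries an additive error $\max(2,\|f\|_{\infty})$; after scaling this error is of the order of the \emph{radius}, not of order $\|f\|_{\infty}$ as in De Silva's uniformly elliptic setting. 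Hence the coupling $\|f\|_{\infty}\le\varepsilon^{2}$ alone does not make the error $o(\varepsilon)$, contrary to what your sketch assumes; this is exactly the difficulty the paper singles out.

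The paper closes this gap with two ingredients absent from your proposal. First, the Imbert--Silvestre interior $C^{1,\alpha}$ estimate gives a universal modulus for $\nabla u$ around $x_{0}=\tfrac{1}{10}e_n$, permitting a dichotomy: if $|\nabla u(x_{0})|\ge \tfrac14$, then $|\nabla u|\ge\tfrac18$ in a universal ball, the equation becomes $\Delta u=f/|\nabla u|^{\mu}$ with small right-hand side, and the classical Harnack argument applies; if $|\nabla u(x_{0})|<\tfrac14$, then $|\nabla u|\le\tfrac12$ in a universal ball $B_{r_1}(x_0)$. Second, in the degenerate case the Imbert Harnack inequality is applied at a small universal scale $r_{3}\le r_{2}/8$, yielding $u-p\ge \tfrac{c_0\varepsilon}{2}-4r_{3}$, and the additive loss $4r_{3}$ is absorbed by a geometric shift: since $|\nabla u|\le\tfrac12<|\nabla p|=1$ on $B_{r_1}(x_0)$, moving the center down to $\bar x_{0}=x_{0}-r_{2}e_n$ gains $r_{2}/2\ge 4r_{3}$, so that $u-p\ge\tfrac{c_0\varepsilon}{2}$ on $B_{r_{3}}(\bar x_{0})$, and the De Silva radial barrier is then grown from $\bar x_{0}$. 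Your observation that $\mathfrak{L}_{\mu}$ applied to the barrier is comparable to $\Delta$ because the barrier's gradient stays bounded away from zero is correct and matches the paper's computation, but without the dichotomy and the shift trick the Harnack input behind the barrier argument --- and hence the equicontinuity of the flattened functions and the entire compactness step --- is missing.
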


As in \cite{DeSilva}, the strategy of the proof of Theorem \ref{th 5.1. introd.} is to obtain the \textbf{improvement of flatness} property for the graph of a solution $u$: if the graph of $u$ oscillates away $\varepsilon$ from a hyperplane in $B_{1}$ then in $B_{\delta_{0}}$ it oscillates $\frac{\delta_{0}\varepsilon}{2}$ away from possibly a different hyperplane. The fundamental steps to achieve this propery are:  Harnack type Inequality and Limiting solution.  In our problem, the structure of the operator $\mathfrak{L}_{\mu}$ requires some changes. In next section, we comment on the main difficulties we came across and how to overcome them. 

Moreover, through a blow-up from Theorem \ref{th 5.1. introd.} and the approach used in \cite{C1}, we obtain the our second main result:

\begin{theorem}[Lipschitz implies $C^{1,\beta}$] \label{Holder1}
Let $u$ be a viscosity solution for the free boundary problem
\begin{eqnarray*}
	\left \{ 
		\begin{array}{lll}
			\mathfrak{L}_{\mu}u  =  f, & \text{ in } & \Omega_{+}\left( u \right),\\
			|\nabla u| = Q, & \text{ on } & \mathfrak{F}(u). 
		\end{array}
	\right.
\end{eqnarray*}
 Assume that $0 \in \mathfrak{F}(u)$, $f \in L^{\infty}(B_1)$ is continuous in $B^{+}_{1}(u)$ and $Q(0)>0$. 
If $\mathfrak{F}(u)$ is a Lipschitz graph in a neighborhood of $0$, then $\mathfrak{F}(u)$ is $C^{1,\beta}$ in a (smaller) neighborhood of $0$.
\end{theorem}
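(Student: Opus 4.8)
\emph{Proof strategy for Theorem \ref{Holder1}.}
The plan is to reduce everything to the flatness result of Theorem \ref{th 5.1. introd.} by a blow-up argument in the spirit of \cite{C1}. Since the conclusion is local and scale invariant, and $Q$ is continuous with $Q(0)>0$, we may first normalise: replacing $u$ by $u/Q(0)$, which (using $\mathfrak{L}_{\mu}(cv)=c^{1+\mu}\mathfrak{L}_{\mu}v$) changes $f$ into $f/Q(0)^{1+\mu}$ and $Q$ into $Q/Q(0)$, we may assume $Q(0)=1$. Let $L$ be the Lipschitz constant of the graph describing $\mathfrak{F}(u)$ near $0$; after a rotation, $\mathfrak{F}(u)=\{x_{n}=g(x')\}$ with $g$ Lipschitz, $g(0)=0$, and $\{u>0\}=\{x_{n}>g(x')\}$ locally. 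It then suffices to show the following uniform flatness at small scales: there is $r_{0}>0$, depending only on the data and on $L$, such that for every $x_{0}\in\mathfrak{F}(u)$ near $0$ and every $r<r_{0}$ the rescaled solution $u_{x_{0},r}(x):=r^{-1}u(x_{0}+rx)$, after dividing by $Q(x_{0})$, satisfies the hypotheses of Theorem \ref{th 5.1. introd.} in $B_{1}$; applying that theorem at each such $x_{0}$ yields that $\mathfrak{F}(u)$ is $C^{1,\beta}$ in a neighbourhood of $0$, with uniform bounds.

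The first ingredient is compactness of blow-ups. Using the a priori Lipschitz bound and the non-degeneracy of viscosity solutions of \eqref{P 5.1. introduc}, fix $x_{0}\in\mathfrak{F}(u)$ near $0$ and a sequence $r_{j}\to 0$; the family $u_{x_{0},r_{j}}$ is bounded in $C^{0,1}_{\loc}(\mathbb{R}^{n})$, so along a subsequence $u_{x_{0},r_{j}}\to u_{\infty}$ locally uniformly. The equation rescales as $\mathfrak{L}_{\mu}u_{x_{0},r_{j}}=f_{x_{0},r_{j}}:=r_{j}f(x_{0}+r_{j}\cdot)$ in $\{u_{x_{0},r_{j}}>0\}$ and $|\nabla u_{x_{0},r_{j}}|=Q_{x_{0},r_{j}}:=Q(x_{0}+r_{j}\cdot)$ on the free boundary, with $\|f_{x_{0},r_{j}}\|_{L^{\infty}(B_{1})}\le r_{j}\|f\|_{L^{\infty}}\to 0$, $[Q_{x_{0},r_{j}}]_{C^{0,\alpha}(B_{1})}=r_{j}^{\alpha}[Q]_{C^{0,\alpha}}\to 0$, and $Q_{x_{0},r_{j}}(0)=Q(x_{0})\to Q(0)=1$. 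By non-degeneracy the sets $\{u_{x_{0},r_{j}}>0\}$ converge in $L^{1}_{\loc}$ and $\mathfrak{F}(u_{x_{0},r_{j}})$ converges locally in the Hausdorff distance, so $u_{\infty}$ is a global viscosity solution of $\mathfrak{L}_{\mu}u_{\infty}=0$ in $\{u_{\infty}>0\}$, $|\nabla u_{\infty}|=Q(x_{0})$ on $\mathfrak{F}(u_{\infty})$, with $0\in\mathfrak{F}(u_{\infty})$, $u_{\infty}$ globally Lipschitz of linear growth, and $\mathfrak{F}(u_{\infty})=\{x_{n}=g_{\infty}(x')\}$ a Lipschitz graph through the origin with constant $\le L$ (the rescaled graphs converging locally uniformly).

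The heart of the matter is the Liouville-type classification of $u_{\infty}$: one must show $u_{\infty}(x)=Q(x_{0})(x\cdot e_{n})^{+}$, i.e. $g_{\infty}\equiv 0$. The graph structure makes $u_{\infty}$ nondecreasing along every $\tau$ in the open cone $\Gamma(e_{n},\theta_{0})$, $\theta_{0}=\arctan(1/L)$; by interior $C^{1,\alpha}$-regularity for $\mathfrak{L}_{\mu}$ the derivative $v_{\tau}:=\partial_{\tau}u_{\infty}\ge 0$ solves the linearised equation $a_{ij}(\nabla u_{\infty})\partial_{ij}v_{\tau}+b\cdot\nabla v_{\tau}=0$ in $\{u_{\infty}>0\}$, with $a_{ij}(p)=|p|^{\mu}(\delta_{ij}+\mu\,p_{i}p_{j}/|p|^{2})$ elliptic away from $\{\nabla u_{\infty}=0\}$, a set which the Hopf lemma and non-degeneracy exclude near $\mathfrak{F}(u_{\infty})$ since $|\nabla u_{\infty}|\to Q(x_{0})>0$ there. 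The strong maximum principle then gives, in each component of $\{u_{\infty}>0\}$, either $v_{\tau}\equiv 0$ or $v_{\tau}>0$; the choice $\tau=e_{n}$ cannot give $v_{e_{n}}\equiv 0$ (that would force $\{u_{\infty}>0\}$ to be invariant under $e_{n}$-translations, incompatible with a finite-slope graph), so $\partial_{e_{n}}u_{\infty}>0$. One now runs the Caffarelli cone-enlargement: comparing $v_{\tau}$ with $\varepsilon\,\partial_{e_{n}}u_{\infty}$ via Harnack and boundary-Harnack inequalities for the operator $a_{ij}(\nabla u_{\infty})$, together with the free boundary condition $|\nabla u_{\infty}|=Q(x_{0})$ which pins the normal derivative on $\mathfrak{F}(u_{\infty})$, one improves the opening of the monotonicity cone by a fixed amount; iterating over dyadic scales forces monotonicity along a half-space of directions, hence $\mathfrak{F}(u_{\infty})$ is a hyperplane and $u_{\infty}$ the corresponding one-plane solution. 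I expect this to be the main obstacle: the degeneracy of $\mathfrak{L}_{\mu}$ means the Harnack and boundary-Harnack inequalities, the Hopf lemma, and the whole linearised theory must be carried out for the non-uniformly elliptic operator $a_{ij}(\nabla u_{\infty})$, and one must use the a priori Lipschitz and non-degeneracy estimates to bound $|\nabla u_{\infty}|$ both above and below near the free boundary.

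Finally, since the classification holds for every blow-up sequence and every base point $x_{0}\in\mathfrak{F}(u)$ near $0$, with constants depending only on the data and $L$, a standard compactness contradiction yields $r_{0}>0$ (independent of $x_{0}$) such that for all $r<r_{0}$ the function $u_{x_{0},r}/Q(x_{0})$ is $\tilde{\varepsilon}$-flat in $B_{1}$ in the direction normal to $\mathfrak{F}(u_{\infty})$, with $\|f_{x_{0},r}/Q(x_{0})^{1+\mu}\|_{L^{\infty}(B_{1})}\le\tilde{\varepsilon}$, $[Q_{x_{0},r}/Q(x_{0})]_{C^{0,\alpha}(B_{1})}\le\tilde{\varepsilon}$, and boundary datum equal to $1$ at the origin. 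Theorem \ref{th 5.1. introd.} then gives that $\mathfrak{F}(u_{x_{0},r})$ is $C^{1,\beta}$ in $B_{1/2}$, i.e. $\mathfrak{F}(u)$ is a $C^{1,\beta}$ graph in $B_{r_{0}/2}(x_{0})$ with a bound depending only on the data; covering a neighbourhood of $0$ by finitely many such balls completes the proof.
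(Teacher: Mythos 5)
Your overall architecture (blow-up at free boundary points, compactness via Lipschitz bounds and non-degeneracy, classification of the blow-up limit, then the flatness Theorem \ref{th 5.1. introd.} at small scales) is the same as the paper's. The genuine gap is in what you call the heart of the matter: the Liouville classification of $u_{\infty}$. You propose to re-run Caffarelli's cone-enlargement scheme directly for the degenerate operator, which requires interior Harnack, boundary Harnack along a Lipschitz free boundary, and a Hopf lemma for the linearised operator $a_{ij}(\nabla u_{\infty})\partial_{ij}$; none of these tools is available in the literature for this degenerate setting, none is supplied in your argument, and you yourself flag them as the main obstacle. Moreover, the linearisation degenerates wherever $\nabla u_{\infty}$ vanishes, and your lower bound on $|\nabla u_{\infty}|$ is only argued near $\mathfrak{F}(u_{\infty})$, not in the interior of $\{u_{\infty}>0\}$, so even the strong maximum principle step for $v_{\tau}=\partial_{\tau}u_{\infty}$ is not justified as written. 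As it stands, this step is a program for re-proving Caffarelli's Lipschitz-implies-$C^{1,\gamma}$ theorem for $\mathfrak{L}_{\mu}$, not a proof.

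The point you miss, and which the paper exploits, is that this heavy machinery is unnecessary because the blow-up limit solves a \emph{homogeneous, non-degenerate} problem: since $\|f_{x_{0},r_{j}}\|_{L^{\infty}}\to 0$ and $Q_{x_{0},r_{j}}\to Q(x_{0})$ (normalised to $1$), the compactness step (Lemma \ref{compactness}) yields that $u_{\infty}$ is a viscosity solution of $\Delta u_{\infty}=0$ in $\{u_{\infty}>0\}$ with $|\nabla u_{\infty}|=1$ on $\mathfrak{F}(u_{\infty})$ --- the factor $|\nabla u_{\infty}|^{\mu}$ drops out once the right-hand side is zero. One is then in the classical Alt--Caffarelli setting: by \cite{C1}, a global solution of the homogeneous Laplacian one-phase problem whose free boundary is a Lipschitz graph has a $C^{1,\gamma}$ free boundary with a universal bound, and the scaling $g_{\lambda}(x')=\lambda^{-1}g(\lambda x')$ applied to the global solution kills the $C^{1,\gamma}$ seminorm as the scale tends to infinity, forcing $g$ linear and $u_{\infty}=x_{n}^{+}$ up to rotation (this is exactly Lemma \ref{Liouville}). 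Replacing your cone-improvement step by this observation closes the gap; the rest of your proposal (normalisation by $Q(0)$, smallness of the rescaled data, uniform flatness and application of Theorem \ref{th 5.1. introd.}) is in line with the paper's proof. One further caveat: the a priori Lipschitz continuity and non-degeneracy you invoke (the paper's Lemma \ref{nao-deg}) rest on a Harnack inequality for $\mathfrak{L}_{\mu}v=f$ that is currently available only in dimension $n=2$, which is why the paper restricts Theorem \ref{Holder1} to that case; your proposal assumes these estimates in all dimensions without comment.
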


In Theorem \ref{Holder1}, the size of the neighborhood where $\mathfrak{F}(u)$ is $C^{1,\beta}$ depends on the radius $r$ of the ball $B_r$ where $\mathfrak{F}(u)$ is Lipschitz, on the Lipschitz norm of $\mathfrak{F}(u)$, on n, $\alpha$ and $\|f\|_{\infty}$. We also emphasize that to obtain the Theorem \ref{Holder1} via the \textit{improvement of flatness propery} for the graph of $u$, we will need Lipschitz regularity and non-degeneracy for $u$. As in \cite{DeSilva}, we will use \textit{Harnack Inequality} and \textit{Maximum Principle} for solutions of the equation $\mathfrak{L}_{\mu} v = f$ in balls to establish Lipschitz regularity and non-degeneracy for $u$. Since we do not have Harnack Inequality available for $n> 2$, see \cite{BD} for $n=2$, the Theorem \ref{Holder1} will be proved for the case $n=2$.

Finally, we believe that Theorems \ref{th 5.1. introd.} and \ref{Holder1} can be
established to the more general operator $\mathfrak{L}_{\mu}u = \vert \nabla u \vert^{\gamma} F(D^{2}u)$, where $F$ is uniformly elliptic and satisfies homogeneity property:

\begin{enumerate}
\item (Ellipticity condition) There exist constants $0 < \lambda \le \Lambda$ such that for any $M,N \in \textrm{Sym}(n)$, with $M \ge 0$ there holds
\begin{equation*} \label{unif1}
	\lambda \|M\| \le F(N + M) - F(N) \le \Lambda \|M\|.
\end{equation*}

\item (Homogeneity condition) For all $t \in \R- \left\lbrace 0 \right\rbrace$ and $M \in \textrm{Sym}(n)$,
$$
F( t M) = | t | F(M).
$$
\end{enumerate}

The paper is organized as follows. In Section 2 we define the notion of viscosity solution to the
free boundary problem \eqref{P 5.1. introduc} and 
gather few tools that we shall use in the proofs of Theorem
\ref{th 5.1. introd.} and Theorem \ref{Holder1}. In
Section 3 we present the proof of Harnack type inequality. Section 4 is devoted to
the proof of improvement of flatness and in Section 5 we establish the regularity of the free boundary $\mathfrak{F}(u)$.

\section{Preliminaries} \label{section2}

\subsection{Notation and Definitions} \label{subsection2}
Let us move towards the hypotheses, set-up and main notations used in this article. For $B_1$ we denote the open unit ball in the Euclidean space $\mathds{R}^n$. Furthermore, if $x\in \mathbb{R}^{n}$ we denote $x=(x_{1}, \dots, x_{n})$. We start by gathering some basic information of the limiting configuration. We shall use viscosity solution setting to access the free boundary regularity theory.
\begin{definition}
Given two continuous functions $u$ and $\phi$ defined in an open $\Omega$ and a point $x_0 \in \Omega$, we say that $\phi$ touches $u$ by below (resp. above) at $x_0$ whenever
$
    u(x_0)=\phi(x_0)
$
$$
    u(x) \ge \phi(x) \,\, (\textrm{resp.} \, \, u(x) \le \phi(x)) \,\,\,\textrm{in a neighborhood} \,\,\mathcal{O} \,\, \textrm{of} \,\, x_0.
$$
If this inequality is strict in $\mathcal{O} \setminus \{x_0\}$, we say that $\phi$ touches $u$ strictly by below (resp. above).
 \end{definition}

\begin{definition}\label{d 5.3}
Let $u \in C(\Omega)$ nonnegative. We say that $u$ is a viscosity solution to
\begin{eqnarray}
 \label{defviscsol}
	\left \{ 
		\begin{array}{lll}
			\mathfrak{L}_{\mu} u  =  f, & \text{ in } & \Omega_{+}\left( u \right),\\
			|\nabla u| = Q, & \text{ on } & \mathfrak{F}(u). 
		\end{array}
	\right.
\end{eqnarray}
 if and only if the following conditions are satisfied:
\begin{enumerate}
\item[(F1)] If $\phi \in C^{2}(\Omega^{+}(u))$ touches $u$ by below (resp. above) at $x_0 \in \Omega^{+}(u)$ then
$$
   \mathfrak{L}_{\mu} \phi(x_0) \le f(x_0) \,\, \left(\textrm{resp.} \,\, \mathfrak{L}_{\mu} \phi(x_0) \ge f(x_0)\right).
$$
\item[(F2)] If $\phi \in C^2(\Omega)$ and $\phi^{+}$ touches $u$ below (resp. above) at $x_0 \in \mathfrak{F}(u)$ and $|\nabla \phi|(x_0) \not= 0$ then
$$
    |\nabla \phi|(x_0) \le Q(x_0) \,\,\, \left( \textrm{resp.} \,\,\, |\nabla \phi|(x_0) \ge Q(x_0)\right).
$$
\end{enumerate}
\end{definition}

We refer to the usual definition of subsolution, supersolution and solution of a degenerate PDE. Let us introduce the notion of comparison subsolution/supersolution.
\begin{definition}\label{d 5.3e}
We say $u \in C(\Omega)$ is a strict (comparison) subsolution (resp. supersolution) to \eqref{P 5.1. introduc} in $\Omega$, if only if $u \in C^{2}(\overline{\Omega^{+}(v)})$ and the following conditions are satisfied:
\end{definition}
 \begin{enumerate}
\item[(G1)] $\mathfrak{L}_{\mu} u > f(x) \,\,\, \left(\textrm{resp.} \,\, < f \right)$ in $\Omega^{+}(u)$;
\item[(G2)] If $x_0 \in \mathfrak{F}(u)$, then
$$
    |\nabla u|(x_0) > Q(x_0) \,\,\, \left(\textrm{resp.} \,\,\, 0 < |\nabla u|(x_0) < Q(x_0)\right).
$$
\end{enumerate}

 Next lemma provides a basic comparison principle for solutions to the free boundary problem \eqref{P 5.1. introduc}. The Lemma below yields the crucial tool in the proof of main result. 
 \begin{lemma}\label{l 5.1} The following remark is an consequence of the definitions above: Let $u,v$ be respectively a solution and a strict subsolution to \eqref{P 5.1. introduc} in $\Omega$. If $u \ge v^{+}$ in $\Omega$ then $u > v^{+}$ in $\Omega^{+}(v) \cup \mathfrak{F}(v)$.
 \end{lemma}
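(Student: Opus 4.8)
The plan is to argue by contradiction, exploiting the strict inequalities in the definition of a comparison subsolution (conditions (G1) and (G2)) together with the viscosity subsolution property of $u$ and the one-phase free boundary condition. First I would record the hypothesis $u \ge v^+$ in $\Omega$ and suppose, for contradiction, that the two functions touch somewhere in $\overline{\Omega^+(v)} \cap \Omega$, i.e. there exists $x_0 \in \Omega^+(v) \cup \mathfrak{F}(v)$ with $u(x_0) = v^+(x_0)$. There are two cases to rule out, according to whether the contact point lies in the positivity set of $v$ or on its free boundary.

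In the interior case $x_0 \in \Omega^+(v)$, I would note that since $u \ge v^+ = v > 0$ near $x_0$ and $u(x_0) = v(x_0) > 0$, the point $x_0$ also lies in $\Omega^+(u)$, and $v$ touches $u$ from below at $x_0$ with $v \in C^2(\overline{\Omega^+(v)})$. By the viscosity subsolution property (F1) of Definition \ref{d 5.3}, this forces $\mathfrak{L}_\mu v(x_0) \le f(x_0)$, which directly contradicts (G1), namely $\mathfrak{L}_\mu v(x_0) > f(x_0)$. (One must be slightly careful that $v$ is an admissible test function even when $|\nabla v|(x_0) = 0$, since $\mathfrak{L}_\mu$ is degenerate there; but $\mathfrak{L}_\mu v(x_0)$ is then simply $0$ in the natural sense, and (G1) still asserts $0 > f(x_0)$, which is the inequality we contradict.) This is the routine half.

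In the free boundary case $x_0 \in \mathfrak{F}(v)$, the contact $u(x_0) = v^+(x_0) = 0$ means $x_0 \in \mathfrak{F}(u)$ as well, because $u \ge v^+ \ge 0$ and $u(x_0) = 0$ with $u$ not identically zero near $x_0$ (indeed $u \ge v^+ > 0$ on $\Omega^+(v)$, which accumulates at $x_0$). Thus $v^+ \in C^2(\Omega)$ near $x_0$ touches $u$ from below at the free boundary point $x_0$. If $|\nabla v|(x_0) \ne 0$, then the viscosity free boundary condition (F2) gives $|\nabla v|(x_0) \le Q(x_0)$, contradicting (G2) which says $|\nabla v|(x_0) > Q(x_0)$. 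The remaining subtlety is that (G2) \emph{allows} $|\nabla v|(x_0) > Q(x_0)$ with, a priori, no lower bound preventing $|\nabla v|(x_0)$ from being nonzero; in fact $Q \ge 0$ and (G2) forces $|\nabla v|(x_0) > Q(x_0) \ge 0$, so $|\nabla v|(x_0) \ne 0$ automatically and (F2) applies. This disposes of the second case.

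Having excluded both cases, no contact point exists, so $u > v^+$ throughout $\Omega^+(v) \cup \mathfrak{F}(v)$, which is the assertion. The main obstacle I anticipate is not the logical skeleton — which is a standard strict-comparison argument — but the careful handling of the degeneracy of $\mathfrak{L}_\mu$ at critical points of the test function: one should check that (F1) is genuinely applicable (or, if the paper's convention excludes test functions with vanishing gradient, that a small perturbation argument recovers the conclusion) and that the one-phase structure is what guarantees the touching point on $\{u=0\}$ is actually a free boundary point of $u$ rather than an interior zero. Both of these are handled by the one-phase hypothesis $u \ge v^+ \ge 0$ and the non-degeneracy built into (G1)–(G2).
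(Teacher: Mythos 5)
Your argument is correct and is precisely the standard touching-point contradiction that the paper has in mind when it states Lemma \ref{l 5.1} without proof as ``a consequence of the definitions'': an interior contact point would violate (F1) against the strict inequality (G1) (including the degenerate case $|\nabla v|(x_0)=0$, which you handle correctly), and a free boundary contact point would violate (F2) against (G2), with $|\nabla v|(x_0)>Q(x_0)\ge 0$ guaranteeing the nonvanishing gradient needed to apply (F2). The only cosmetic point you could note is that (F2) formally asks for a test function in $C^{2}(\Omega)$ while $v$ is only assumed $C^{2}(\overline{\Omega^{+}(v)})$, a mismatch resolved by testing with a local $C^2$ extension of $v$ near $x_0$, exactly as in \cite{DeSilva}.
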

 
As in \cite{DeSilva}, another fundamental tool in the proof of Theorem \eqref{th 5.1. introd.} is the regularity of solutions to the classical Neumann problem for the constant coefficient linear equation
\begin{equation} \label{const}
\left\{
\begin{array}{rcl}
 \Delta u_{\infty} &=& 0,  \quad \mbox{in}  \quad B^{+}_{\rho} ,\\
\frac{\partial u_{\infty}}{\partial \nu}&=& 0,  \quad \mbox{on} \quad \Upsilon_{\rho},
\end{array}
\right.
\end{equation}
where $\nu := (0,0, \ldots,0,1)$ and we denote by
\begin{eqnarray}
	B^{+}_{\rho} &:=& \{x \in \mathbb{R}^n : |x| <\rho, x_n >0\} \label{neumann1} \\
          \Upsilon_{\rho} &:=& \{x \in \mathbb{R}^n : |x| <\rho, x_n=0\}  \label{neumann2}.
\end{eqnarray}
We use the notion of viscosity solution to \eqref{const}:

\begin{definition}\label{]defVisc}
Let $u_{\infty} \in C(B_{\rho} \cap \{x_n \ge 0\})$. We say that $u_{\infty}$ is a viscosity solution to \eqref{const} if given $P(x)$ a quadratic polynomial touching $u_{\infty}$ by below (resp. above) at $x_0 \in B_{\rho} \cap \{x_n \ge 0\}$, then
\begin{enumerate}
\item[(i)] if $ x_0 \in B^{+}_{\rho} $ then $\Delta P(x_0) \le 0$ (resp. $\Delta P(x_0) \ge 0$);
\item[(ii)] if $ x_0 \in \Upsilon_{\rho}$ then $\frac{\partial P(x_0)}{\partial \nu} \le 0$ (resp. $\frac{\partial P(x_0)}{\partial \nu} \ge 0$)
\end{enumerate}
\end{definition}

\begin{remark}\label{remark}
Notice that, in the definition above we can choose polynomials $P$ that touch $u_{\infty}$ strictly by below/above. Also, it suffices to verify that (ii) holds for polynomials $\tilde{P}$ with $\Delta \tilde{P} >0$ (see \cite{DeSilva}). 
\end{remark}

The proof of $C^{2}$-regularity of solutions to the classical Neumann problem is classical and will be omitted (see for example \cite{DeSilva}).
\begin{lemma}\label{reg_Laplace}
Let $u_{\infty}$ be a viscosity solution to
\begin{equation} \label{reg1}
\left\{
\begin{array}{rcl}
\Delta u_{\infty} &=& 0 , \quad \mbox{in}  \quad B^{+}_{\rho} \\
\frac{\partial u_{\infty}}{\partial \nu}&=& 0,  \quad \mbox{on}  \quad \Upsilon_{\rho}
\end{array}
\right.
\end{equation}
with $\|u_{\infty}\|_{L^{\infty}} \le 1$. There exists a universal constant $C_0 >0$ such that 
$$
    |u_{\infty}(x)-u_{\infty}(0) - \nabla u_{\infty}(0) \cdot x| \le C_0 \rho^{2} \quad \textrm{in} \,\, B_{\rho} \cap \{x_n \ge 0\}.
$$\end{lemma}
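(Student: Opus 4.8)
The idea is to remove the Neumann condition in \eqref{reg1} by the \emph{even reflection} of $u_{\infty}$ across $\{x_{n}=0\}$, which turns the problem into an interior problem for the Laplacian on the whole ball; once the reflected function is known to be harmonic, the stated bound is just its second order Taylor expansion at the origin, controlled by the interior estimate on the Hessian. (As in \cite{DeSilva}, the normalization $\|u_{\infty}\|_{L^{\infty}}\le1$ is taken on a ball of fixed size and $\rho\le\tfrac12$, so the interior estimates below are universal.)

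\emph{Step 1: the even reflection is a viscosity solution of Laplace's equation.} Set $\tilde u(x',x_{n}):=u_{\infty}(x',|x_{n}|)$. Since $u_{\infty}$ is continuous up to $\Upsilon_{\rho}$, $\tilde u$ is continuous on $B_{\rho}$, even in $x_{n}$, and $\|\tilde u\|_{L^{\infty}}=\|u_{\infty}\|_{L^{\infty}}\le1$. The claim is that $\tilde u$ is a viscosity solution of $\Delta\tilde u=0$ in $B_{\rho}$ in the usual sense. If a quadratic polynomial $P$ touches $\tilde u$ from below (resp.\ above) at $x_{0}$ with $x_{0,n}\neq0$, the conclusion $\Delta P(x_{0})\le0$ (resp.\ $\ge0$) is immediate from part (i) of Definition~\ref{]defVisc} for $u_{\infty}$, using the evenness of $\tilde u$ when $x_{0,n}<0$. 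If $x_{0}\in\Upsilon_{\rho}$ and $P$ touches $\tilde u$ from below there, replace $P$ by its symmetrization $\tfrac12\big(P(x',x_{n})+P(x',-x_{n})\big)$: by evenness of $\tilde u$ it still touches from below at $x_{0}$, it is even in $x_{n}$ (so $\partial_{\nu}P(x_{0})=0$), and it has the same Laplacian at $x_{0}$; by Remark~\ref{remark} we may also take the touching strict. Now a sliding argument — as in the proof of the corresponding fact in \cite{DeSilva} — shows $\Delta P(x_{0})\le0$: otherwise, raising $P$ by a small constant produces a first contact with $\tilde u$ from below at a point $y\to x_{0}$; by the evenness of $P$ together with condition (ii) of Definition~\ref{]defVisc} in the sharpened form of Remark~\ref{remark}, this first contact cannot sit on $\Upsilon_{\rho}$, so $y_{n}\neq0$, and then part (i) of Definition~\ref{]defVisc} forces $\Delta P(y)\le0$; letting $y\to x_{0}$ contradicts $\Delta P(x_{0})>0$. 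The ``from above'' case follows by applying the same argument to $-u_{\infty}$, which solves the same problem. Consequently, by the classical equivalence of viscosity and classical solutions of Laplace's equation, $\tilde u$ is harmonic in $B_{\rho}$; in particular $\tilde u\in C^{\infty}(B_{\rho})$ and $u_{\infty}$ is smooth up to $\Upsilon_{\rho}$. This step — transferring the viscosity property through the reflection, and especially handling test functions that touch on $\Upsilon_{\rho}$ — is the only non-routine point, and is precisely the reason the lemma may be quoted as classical.

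\emph{Step 2: Taylor expansion.} By the interior derivative estimates for harmonic functions, $\sup_{B_{1/2}}|D^{2}\tilde u|\le C(n)\|\tilde u\|_{L^{\infty}}\le C(n)$. For $x\in B_{\rho}\cap\{x_{n}\ge0\}\subset B_{1/2}$ the segment $[0,x]$ lies in $B_{1/2}$, so Taylor's formula with integral remainder gives
\[
   \big|\tilde u(x)-\tilde u(0)-\nabla\tilde u(0)\cdot x\big|\ \le\ \tfrac12\Big(\sup_{B_{1/2}}|D^{2}\tilde u|\Big)|x|^{2}\ \le\ C_{0}\,\rho^{2},
\]
with $C_{0}=C_{0}(n)$ universal. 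Since $\tilde u=u_{\infty}$ and $\nabla\tilde u(0)=\nabla u_{\infty}(0)$ on $\{x_{n}\ge0\}$, this is the asserted estimate.
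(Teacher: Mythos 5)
The paper itself gives no proof of this lemma (it is quoted as classical, with a pointer to \cite{DeSilva}), so you are supplying an argument where none is written; the even-reflection strategy is indeed the natural and standard route, and your Step 2 is fine under the fixed-ball normalization you state. However, there is a genuine gap exactly at the point you yourself single out as the only non-routine one: the exclusion of a contact point on $\Upsilon_{\rho}$. Raising the symmetrized polynomial $\bar P$ by a constant does not change its normal derivative, and since $\bar P$ is even in $x_{n}$ one has $\partial_{n}\bar P\equiv 0$ on the whole hyperplane $\{x_{n}=0\}$, not just at $x_{0}$. Hence if the first contact point $y$ of the raised polynomial lies on $\Upsilon_{\rho}$, condition (ii) of Definition~\ref{]defVisc} only asserts $\partial_{n}\bar P(y)\le 0$, which is trivially satisfied; no contradiction arises, and nothing forces $y_{n}\neq 0$. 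Remark~\ref{remark} cannot be invoked to sharpen this: it says that when \emph{verifying} that a function is a solution it suffices to test (ii) with polynomials having $\Delta\tilde P>0$; it does not give any stronger conclusion when one \emph{uses} the solution property at a contact point. As written, therefore, the case $\Delta\bar P(x_{0})>0$ with contact on the flat boundary is not ruled out, and this is precisely the case that has to be excluded.

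The repair is the standard tilting argument rather than sliding by constants alone. Suppose $\bar P$ is even, touches $u_{\infty}$ strictly from below at $x_{0}\in\Upsilon_{\rho}$ (within a small closed half-ball $\overline{B_{r}(x_{0})}\cap\{x_{n}\ge 0\}$), and $\Delta\bar P>0$. For small $\delta>0$ consider $\bar P(x)+\delta x_{n}$, lowered by the constant $c_{\delta}:=\min_{\overline{B_{r}(x_{0})}\cap\{x_{n}\ge0\}}\bigl(u_{\infty}-\bar P-\delta x_{n}\bigr)\le 0$; strict touching gives $u_{\infty}-\bar P\ge m>0$ on the curved part of the boundary, so for $\delta<m/r$ the minimum is attained at some $y_{\delta}$ in the relatively open half-ball, and $\bar P+\delta x_{n}+c_{\delta}$ touches $u_{\infty}$ from below at $y_{\delta}$. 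If $y_{\delta}\in\Upsilon_{\rho}$, condition (ii) gives $\partial_{n}\bar P(y_{\delta})+\delta\le 0$, impossible since $\partial_{n}\bar P(y_{\delta})=0$ by evenness; if $y_{\delta}$ is an interior point, condition (i) gives $\Delta\bar P\le 0$, again impossible. This contradiction proves $\Delta\bar P(x_{0})\le 0$ and closes Step 1; with this substitution for your constant-raising step, the rest of your proof (the interior case, the touching-from-above case via $-u_{\infty}$, and the Taylor estimate of Step 2) goes through.
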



\subsection{Difficulties and Changes}

In this section, we comment on the main difficulties we came across to obtain the \textit{improvement of flatness} property for the graph of a solution $u$ of \eqref{P 5.1. introduc} and how to overcome them. 

\begin{enumerate}
\item \textbf{Harnack type Inequality}. When we consider the problem \eqref{P 5.1. introduc} for $\mu > 0$, the first difficulty we find lies in the following fact: in general, if $p$ is an affine function and $u$ is a solution to the problem 
\begin{eqnarray}
\label{Degen. prob.}
 |\nabla v|^{\mu}\Delta v  =  f, \quad \text{ in } B_{r}(x_{0}),
\end{eqnarray} 
we can not conclude that $u + p$ is a solution to the equation \eqref{Degen. prob.}. For $\mu = 0$ we know $u + p$ is still solution for \eqref{Degen. prob.}. In \cite{DeSilva}, this fact is important because it allows us to apply Harnack Inequality for $v(x) = u(x) - x_{n}$ which is crucial to reach an \textit{improvement of flatness} for the graph of $u$ . We overcome this difficulty as follows: \\

Step 1. We notice that the function $v(x) = u(x) - x_{n}$ is a solution to the problem
\begin{eqnarray}
\label{Degen. with e}
 \vert \nabla v + e  \vert^{\mu} \Delta v  =  f, \quad \text{ in } B_{r}(x_{0}),
\end{eqnarray} 
where $e \in \mathbb{R}^{n}$ with $|e|=1$. Then, we know from \cite{Imbert Harnack} that $v$ satisfies the following Harnack Inequality 
\begin{eqnarray}
\label{Imbert Harnack 1}
    \sup_{B_{r/2}(x_{0})}v \leq C\left\lbrace \inf_{B_{r/2}(x_{0})} v  +  \max(2,  \Vert f \Vert_{\infty})  \right\rbrace,
\end{eqnarray}
where $C>0$ is a constant depending only on $n$ and $\mu$. \\

Step 2. Since we will use a blow-up argument to prove our main results (Theorem \eqref{th 5.1. introd.} and Theorem \eqref{Holder1}), we can assume that $\Vert f \Vert_{\infty} $ is small. Using the homogeneity property of $\Delta$ we consider the scaling function $v_{r}(x) = \frac{v(rx + x_{0})}{r}$ and apply \eqref{Imbert Harnack 1} to obtain 
\begin{eqnarray}
\label{Scaling Imbert Harnack}
   \sup_{B_{r/2}(x_{0})}v \leq C\left\lbrace \inf_{B_{r/2}(x_{0})} v  + 2r  \right\rbrace.
\end{eqnarray}

Precisely, we use the following result:

\begin{lemma}
\label{delta - Harnack inequality}
Let $u$ be a non-negative viscosity solution to   
\begin{eqnarray}
\label{equatio with e_{n}}
			 \vert \nabla v + e \vert^{\mu} \Delta v =  f, \quad  \text{ in }  B_{\delta},
\end{eqnarray}
where $0< \delta < 1$, $\Vert f \Vert_{\infty} \leq 1$ and $\vert e \vert = 1$. Then, there exists a constant $C$
depending only on $n$ and $\mu$ such that
$$
    \sup_{B_{\delta/ 2}}v \leq C\left\lbrace \left( \inf_{B_{\delta/ 2}} v  + 2\delta \right ) \right\rbrace.
$$
\end{lemma}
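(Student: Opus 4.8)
The plan is to deduce Lemma \ref{delta - Harnack inequality} from the Harnack inequality \eqref{Imbert Harnack 1} — which the paper attributes to \cite{Imbert Harnack} for solutions of $|\nabla v + e|^{\mu}\Delta v = f$ on $B_r(x_0)$ — by a rescaling argument that exploits the homogeneity of the Laplacian. Given a nonnegative viscosity solution $v$ to \eqref{equatio with e_{n}} on $B_\delta$ with $\|f\|_\infty \le 1$ and $|e|=1$, first I would set $v_\delta(x) := \delta^{-1} v(\delta x)$ for $x \in B_1$. A direct computation gives $\nabla v_\delta(x) = (\nabla v)(\delta x)$ and $D^2 v_\delta(x) = \delta (D^2 v)(\delta x)$, hence $|\nabla v_\delta(x) + e|^{\mu}\Delta v_\delta(x) = \delta\, \big(|\nabla v + e|^{\mu}\Delta v\big)(\delta x) = \delta f(\delta x) =: \tilde f(x)$ in the viscosity sense, with $\|\tilde f\|_\infty \le \delta \le 1$. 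So $v_\delta$ is again a nonnegative viscosity solution of an equation of the same form on $B_1$ with right-hand side of norm at most $\delta$ and with $|e| = 1$ unchanged.

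Next I would apply \eqref{Imbert Harnack 1} to $v_\delta$ on $B_1$ (i.e., with $r = 1$, $x_0 = 0$):
$$
\sup_{B_{1/2}} v_\delta \;\le\; C\Big\{ \inf_{B_{1/2}} v_\delta + \max\big(2,\|\tilde f\|_\infty\big)\Big\} \;=\; C\Big\{ \inf_{B_{1/2}} v_\delta + 2\Big\},
$$
using $\|\tilde f\|_\infty \le \delta < 1 \le 2$, with $C = C(n,\mu)$. Then I would undo the scaling: $\sup_{B_{1/2}} v_\delta = \delta^{-1}\sup_{B_{\delta/2}} v$ and $\inf_{B_{1/2}} v_\delta = \delta^{-1}\inf_{B_{\delta/2}} v$. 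Multiplying through by $\delta$ yields
$$
\sup_{B_{\delta/2}} v \;\le\; C\Big\{ \inf_{B_{\delta/2}} v + 2\delta\Big\},
$$
which is exactly the claimed estimate (the outer braces in the statement are cosmetic). One should remark that the constant $C$ is independent of $\delta$ and $e$, which is the whole point of the rescaling — it converts the additive constant $\max(2,\|f\|_\infty)$ of the unscaled inequality into the $\delta$-small term $2\delta$ while keeping the multiplicative constant universal.

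The only genuinely delicate point is to check that $v_\delta$ is a viscosity solution of the rescaled equation — i.e., that the notion of viscosity solution for degenerate operators of the form $|\nabla v + e|^{\mu}\Delta v = g$ is stable under the affine change of variables $x \mapsto \delta x$ together with the scaling $v \mapsto \delta^{-1}v$. This is standard: if $\phi$ touches $v_\delta$ by below (resp. above) at a point $x_0 \in B_1$, then $\psi(y) := \delta\,\phi(y/\delta)$ touches $v$ by below (resp. above) at $y_0 = \delta x_0$, and $|\nabla\psi + e|^{\mu}\Delta\psi$ evaluated at $y_0$ equals $\delta^{-1}\cdot\delta\cdot |\nabla\phi + e|^{\mu}\Delta\phi$ evaluated at $x_0$ — wait, more carefully, $\nabla\psi(y) = (\nabla\phi)(y/\delta)$ and $\Delta\psi(y) = \delta^{-1}(\Delta\phi)(y/\delta)$, so $\big(|\nabla\psi + e|^{\mu}\Delta\psi\big)(y_0) = \delta^{-1}\big(|\nabla\phi+e|^{\mu}\Delta\phi\big)(x_0)$, and the defining inequality $\le f(y_0) = f(\delta x_0)$ for $v$ transfers to $\le \delta f(\delta x_0) = \tilde f(x_0)$ for $v_\delta$. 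I would state this verification briefly rather than belabor it. Everything else is bookkeeping on sup/inf under dilation, and the homogeneity $\mathfrak{L}_\mu(\delta^{-1}v(\delta\,\cdot)) = \delta\,\mathfrak{L}_\mu v(\delta\,\cdot)$ is immediate from the $1$-homogeneity of the second-order part $\Delta$ and the fact that $\nabla$ is unaffected by this particular scaling.
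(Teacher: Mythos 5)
Your proposal is correct and follows essentially the same route as the paper: rescale $v_\delta(x)=\delta^{-1}v(\delta x)$ to the unit ball, apply the Harnack inequality of Imbert (as stated in \eqref{Imbert Harnack 1}, with the additive term $\max(2,\delta\|f\|_\infty)\le 2$), and scale back to obtain the factor $2\delta$. The only difference is cosmetic: the paper also spells out why the rescaled operator $G(\vec q,M)=|\vec e+\vec q|^{\mu}\mathrm{Trace}\,M$ fits the structural hypotheses of \cite{Imbert Harnack} (uniform ellipticity for $|\vec q|\ge 2$), whereas you cite the stated inequality directly and instead verify the viscosity-solution invariance under the scaling.
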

\begin{proof}
Define
\begin{eqnarray}
u(x) = \dfrac{v(\delta x)}{\delta}
\end{eqnarray}
for all $ x  \in B_{1}$. Notice that $v$ is a solution to 
\begin{eqnarray}
\label{equatio with e_{n} proof}
			 \vert \nabla u + e  \vert^{\mu} \Delta u =  \delta f (\delta x), \quad  \text{ in }  B_{1}.
\end{eqnarray}
Notice that, if $F(M):= \textrm{Trace} \ M$, the equation can be written as $G(Du,D^2 u)=f$ with
$$
	G(\vec q, M) \colon= |\vec e + \vec q|^{\mu} F(M).
$$
In particular, if $|\vec q| \ge 2$ then $|\vec e + \vec q| \ge 1$ and
$$
\left\{
\begin{array}{ccccccc}
G(\vec q, M) &=& 0\\
|\vec q| &\ge& 2\\
\end{array}
\right.
\Rightarrow
\left\{
\begin{array}{ccccccc}
\mathscr{M}^{+}(D^2 u) + |f| &\ge& 0\\
\mathscr{M}^{-}(D^2 u) - |f| &\le& 0\\
\end{array}
\right.
$$
Thus, from \cite{Imbert Harnack}, we can apply Harnack inequality to obtain
\begin{eqnarray}
\label{for Harnack Inequality 1}
\sup_{B_{1/ 2}}u & \leq & C\left\lbrace \left( \inf_{B_{1/ 2}} u  + \max(2, \delta \Vert f \Vert_{\infty}) \right ) \right\rbrace \\ \nonumber
& \leq & C\left\lbrace \left( \inf_{B_{1/ 2}} u  + 2 \right ) \right\rbrace,
\end{eqnarray}
where $C= C(n, \mu)$ is a positive constant. The Lemma \ref{delta - Harnack inequality} is concluded.
\end{proof}

Step 3. The Harnack Inequality \eqref{Scaling Imbert Harnack} is different from the Harnack Inequality used in \cite{DeSilva}. In fact, for $0< \varepsilon < 1$, DeSilva used the inequality  
\begin{eqnarray}
\label{DeSilva Harnack}
   \sup_{B_{r/2}(x_{0})}v \leq C\left\lbrace \inf_{B_{r/2}(x_{0})} v  + \Vert f \Vert_{\infty}  \right\rbrace
\end{eqnarray}
to prove that if $\Vert f \Vert_{\infty}$ satisfies the \textit{smallness} condition $\Vert f \Vert_{\infty} \leq \varepsilon^{2}$ we can build radial barries $w_{r, x_{0}}$
and apply comparison techniques to achieve an appropriate Harnack type Inequality (see Theorem 3.1 and Lemma 3.3 in \cite{DeSilva}) to establish the \textit{improvement of flatness}. A carefully analysis the behavior of $v = u - x_{n}$ (or $v = x_{n} - u$) in a ball $B_{r_{1}}(x_{0})$ with
$$\vert \nabla u \vert < \frac{1}{2} \quad \text{in} \ B_{r_{1}}(x_{0}), $$ 
and $r_{1} = r_{1}(n, \mu) >0$, reveals that if we consider radial barries $w_{r, x_{0} - r_{2}e_{n}}$ (or $w_{r, x_{0} + r_{2}e_{n}}$) the condition $\Vert f \Vert_{\infty} \leq \varepsilon^{2}$ used in \eqref{DeSilva Harnack} can be replaced by an adequate \textit{smallness} condition of the \textit{radius} $r =r(r_{2})$ in \eqref{Scaling Imbert Harnack} to obtain a Harnack type Inequality, where $r_{2} = r_{2}(r_{1})$.

\item \textbf{Limiting solution}. In the more general case $\mathfrak{L}_{\mu}u = \vert \nabla u \vert^{\gamma} F(D^{2}u)$, where $F$ is uniformly elliptic and satisfies homogeneity property, our \textit{Limiting solution} is given by a classical Neumann problem for the constant coefficient linear equation
\begin{equation} 
\label{General limiting solution}
\left\{
\begin{array}{rcl}
 \mathcal{F}_{0}(D^2 u_{\infty}) &=& 0,  \quad \mbox{in} \,\, B^{+}_{r} ,\\
\frac{\partial u_{\infty}}{\partial \mu}&=& 0,  \quad \mbox{in} \,\, \Upsilon_{r},
\end{array}
\right.
\end{equation}
where $\mu := (0,0, \ldots,0,1)$.  In \cite{Sil},  Emmanouil Milakin and Luis E. Silvestre studied the regularity for viscosity solutions of fully nonlinear uniformly elliptic second order equations with Neumann boundary data.  More precisely,  they showed that viscosity solutions of the homogeneous problem with Neumann boundary data \eqref{General limiting solution} are class $C^{1,\alpha_0}(\overline{B^{+}_{\rho}})$ for some $\alpha \in (0,1)$. We point out that the regularity $C^{1,\alpha_0}(\overline{B^{+}_{\rho}})$ for $u_{\infty}$ is sufficient to obtain the \textit{improvement of flatness}.

\end{enumerate}

\section{Harnack Type Inequality}

In this section, based on comparison principle granted in Lemma \ref{l 5.1}, we prove a Harnack type inequality for a solution $u$ to the problem \eqref{P 5.1. introduc} with  the following conditions:
\begin{eqnarray}
\label{t 5.2.b} \Vert  f \Vert_{L^{\infty}\left( \Omega \right)} \leq \varepsilon^{2}, \\ 
\label{t 5.2.d} \Vert Q - 1 \Vert_{L^{\infty}\left(\Omega \right)} \leq \varepsilon^{2},
\end{eqnarray}
for $0 < \varepsilon < 1$.
\begin{lemma}
\label{Harnack lemma 5.2}
Let $u$ be a viscosity solution to (\ref{P 5.1. introduc}) in $\Omega$, under assumptions  (\ref{t 5.2.b})--(\ref{t 5.2.d}). There exist a universal constant $\tilde{\varepsilon} > 0$ such that if  $0 < \varepsilon \leq \tilde{\varepsilon}$ and $u$ satisfies
\begin{eqnarray}
\label{l 5.2.1}
\\
p^{+}\left( x \right) \leq u \left( x \right) \leq \nonumber \left( p\left( x \right) + \varepsilon \right)^{+}, \ \ \vert \sigma \vert < \frac{1}{20} \ \ \mbox{in} \ \ B_{1}\left( 0 \right), \ \ p\left( x \right) = x_{n} + \sigma, 
\end{eqnarray} 
then if at $x_{0}= \frac{1}{10}e_{n}$ 
\begin{eqnarray}
\label{l 5.2.2}
u \left( x_{0} \right) \geq \left( p\left( x_{0} \right) + \frac{\varepsilon}{2} \right)^{+}, 
\end{eqnarray} 
then
\begin{eqnarray}
\label{l 5.2.3}
u \geq \left( p + c\varepsilon \right)^{+} \ \ \mbox{in} \ \ \overline{B}_{\frac{1}{2}}\left( 0 \right), 
\end{eqnarray}
for some $0 < c < 1$. Analogously, if
\begin{eqnarray}
\label{l 5.2.4}
u \left( x_{0} \right) \leq \left( p\left( x_{0} \right) + \frac{\varepsilon}{2} \right)^{+}, 
\end{eqnarray} 
then
\begin{eqnarray}
\label{l 5.2.5}
u \leq \left( p + \left( 1 - c \right)\varepsilon \right)^{+} \ \ \mbox{in} \ \ \overline{B}_{\frac{1}{2}}\left( 0 \right). 
\end{eqnarray}
\end{lemma}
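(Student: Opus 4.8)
The plan is to follow the DeSilva strategy of replacing the smallness assumption $\|f\|_\infty\le\varepsilon^2$ of \cite{DeSilva} by the rescaled Harnack inequality of Lemma \ref{delta - Harnack inequality}, using the freedom to choose a small interior radius coming from the structure of $\mathfrak{L}_\mu$. First I would treat the case \eqref{l 5.2.2} and prove \eqref{l 5.2.3}; the other case is symmetric (work with $x_n - u$ instead of $u - x_n$). By the flatness hypothesis \eqref{l 5.2.1}, $u(x_0) \ge \frac{1}{10}+\sigma$ is bounded away from $0$, so by non-degeneracy/continuity there is a universal radius $r_1 = r_1(n,\mu) > 0$ with $B_{r_1}(x_0)\subset\Omega^+(u)$ and $|\nabla u| < \tfrac12$ throughout $B_{r_1}(x_0)$; hence on this ball $u$ solves $\mathfrak{L}_\mu u = f$ classically. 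The key observation, noted in Step 3 of the Difficulties section, is that $v := u - p = u - x_n - \sigma$ satisfies $|\nabla v + e_n|^\mu \Delta v = f$ with $|\nabla v| < \tfrac32$, so $e := e_n$ plays the role of the unit vector in \eqref{equatio with e_{n}}. Since $v \ge 0$ on $B_{r_1}(x_0)$ and $v(x_0)\ge \tfrac{\varepsilon}{2}$ by \eqref{l 5.2.2}, applying Lemma \ref{delta - Harnack inequality} to $v$ on $B_{r_1}(x_0)$ (after the indicated scaling, with $\delta \sim r_1$) gives $\inf_{B_{r_1/2}(x_0)} v \ge c_0\varepsilon - C r_1$.

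The second step is to propagate this pointwise gain into the full ball $B_{1/2}(0)$ by a barrier/comparison argument, exactly as in Lemma 3.3 of \cite{DeSilva} but with the new ``smallness of the radius'' in place of ``smallness of $f$''. Here is where the choice $r = r(r_2)$, $r_2 = r_2(r_1)$ enters: I would build a radial comparison subsolution $w_{r,y}$ centered at $y = x_0 - r_2 e_n$, of the DeSilva type (roughly $w = c(|x-y|^{-\gamma} - \rho^{-\gamma})$ suitably normalized so that $\mathfrak{L}_\mu w > \|f\|_\infty$ in the annulus and $|\nabla w| > Q$ on its zero level set — this is possible since $Q$ is $\varepsilon^2$-close to $1$ and $f$ is $\varepsilon^2$-small, but crucially one also uses that the relevant radius is small). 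One checks $v \ge \eta\, w_{r,y}$ on the inner sphere using the pointwise estimate from Step 1, and $v \ge 0 \ge \eta w_{r,y}$ on the outer sphere, and then Lemma \ref{l 5.1} (the comparison principle) pushes the inequality $v \ge \eta w_{r,y}$, equivalently $u \ge (p + \eta w_{r,y})^+$, throughout the annular region, which contains $\overline{B}_{1/2}(0)$. Since $w_{r,y} \ge c_1 > 0$ on $\overline{B}_{1/2}(0)$, this yields \eqref{l 5.2.3} with $c = \eta c_1 c_0$, and choosing $\tilde\varepsilon$ small enough absorbs all error terms.

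I expect the main obstacle to be the barrier construction and the bookkeeping of constants: one must exhibit a single radial function $w$ that is simultaneously a strict subsolution of the degenerate operator $\mathfrak{L}_\mu$ and has gradient strictly above $Q$ on its free boundary, \emph{and} whose relevant annulus can be placed so that the center $x_0 - r_2 e_n$ sits at controlled distance from $\mathfrak{F}(u)$ while the annulus still covers $B_{1/2}$. The degeneracy is actually helpful for the PDE inequality (the factor $|\nabla w|^\mu$ only rescales $\Delta w$, and away from the zero set $|\nabla w|$ is bounded below), but it forces the radius $r$ entering \eqref{Scaling Imbert Harnack} to be taken small in a way calibrated to $r_1, r_2$ — hence the chain $r = r(r_2)$, $r_2 = r_2(r_1)$, $r_1 = r_1(n,\mu)$. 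Once these radii are fixed universally, the error term $2\delta$ in Lemma \ref{delta - Harnack inequality} is a fixed multiple of a universal small number and can be reabsorbed by shrinking $c$ and $\tilde\varepsilon$. The symmetric case \eqref{l 5.2.4}–\eqref{l 5.2.5} follows verbatim by replacing $v = u - p$ with $v = (p+\varepsilon) - u \ge 0$, using a supersolution barrier and the supersolution part of Lemma \ref{l 5.1}.
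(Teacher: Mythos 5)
There is a genuine gap, and it sits exactly at the point your proposal treats as routine. First, your claim that flatness plus ``non-degeneracy/continuity'' yields a universal radius $r_1$ with $|\nabla u|<\tfrac12$ in $B_{r_1}(x_0)$ is unjustified and generically false: since $p^+\le u\le (p+\varepsilon)^+$, $u$ is $\varepsilon$-close to $x_n+\sigma$, so one expects $|\nabla u|$ near $1$, not below $\tfrac12$. The paper instead uses the interior $C^{1,\alpha}$ estimate of \cite{IS} to split into two cases according to the size of $|\nabla u(x_0)|$: if $|\nabla u(x_0)|\ge \tfrac14$, then $|\nabla u|\ge\tfrac18$ on a universal ball, the equation becomes uniformly elliptic there ($\Delta u=f/|\nabla u|^{\mu}$ with right-hand side of size $8^{\mu}\varepsilon^{2}$), and the \emph{classical} Harnack inequality with error $O(\varepsilon^{2})$ gives the clean bound $u-p\ge c_1\varepsilon$ directly; only when $|\nabla u(x_0)|<\tfrac14$ can one arrange $|\nabla u|\le\tfrac12$ on a universal ball. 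Your proposal omits the non-degenerate case entirely.

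Second, and more seriously, your Step 1 estimate $\inf_{B_{r_1/2}(x_0)}v\ge c_0\varepsilon-Cr_1$ is vacuous: the error in Lemma \ref{delta - Harnack inequality} scales like the radius $\delta$, which is a \emph{fixed universal} constant, while the gain $c_0\varepsilon/2$ tends to zero with $\varepsilon$. Your suggestion that this error ``can be reabsorbed by shrinking $c$ and $\tilde\varepsilon$'' runs backwards: shrinking $\tilde\varepsilon$ only makes the gain smaller relative to the fixed error. The paper's actual mechanism --- the whole point of the shifted center $\overline{x}_0=x_0-r_2e_n$, which you mention but never use --- is a translation trick that exploits the slope gap between $u$ and $p$ in Case 1: since $|\nabla u|\le\tfrac12$ while $p$ has slope exactly $1$ in the $e_n$-direction, moving down by $r_2$ gains $r_2-\tfrac{r_2}{2}=\tfrac{r_2}{2}$, and choosing $r_3=\min\{r_1/4,\,r_2/8\}$ makes the Harnack error $4r_3\le \tfrac{r_2}{2}$, so that $u-p\ge \tfrac{c_0\varepsilon}{2}$ holds \emph{without any error term} on $B_{r_3}(\overline{x}_0)$; only then does the sliding radial barrier (your Step 2, which is essentially the paper's argument via Lemma \ref{l 5.1}) propagate the gain to $\overline B_{1/2}$. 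Without the case split and without this cancellation of the radius-sized error, the proof as proposed does not go through.
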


\begin{proof}
We verify \eqref{l 5.2.3}. The proof of \eqref{l 5.2.5} is analogous. Notice that 
\begin{eqnarray}
\label{l 5.2.11}
B_{\frac{1}{20}}\left( x_{0} \right) \subset B^{+}_{1}\left( u \right).
\end{eqnarray}
From \cite{IS} we know that $u$ is $C^{1, \alpha}$ in $B_{\frac{1}{40}}\left( x_{0} \right)$ with
\begin{eqnarray*}
\left[ u \right]_{1 + \alpha, B_{1/40}(x_{0})} & \leq & C \left( \Vert u \Vert_{\infty} + \Vert f \Vert^{\frac{1}{\mu +1}}_{\infty} \right) \\ \nonumber
& \leq & 3C,
\end{eqnarray*}
where $\alpha = \alpha (n, \mu) \in (0, 1)$ and $C=C(n, \mu)>1$. Now we consider two cases:\\
\item[Case 1]: $\vert \nabla u (x_{0}) \vert < \frac{1}{4}$.\\

Choose $r_{1} = r_{1}(n, \mu) > 0$ such that 

\begin{eqnarray}
\label{NEW 1}
\vert \nabla u \vert \leq \dfrac{1}{2} \quad \text{in} \ B_{r_{1}}(x_{0}).
\end{eqnarray}

There exists a constant $r_{2} = r_{2}(r_{1}) = r_{2}(n, \mu) > 0$ that satisfies
\begin{eqnarray*}
\label{NEW 2}
(x - r_{2} e_{n}) \in B_{r_{1}} (x_{0}), \quad \text{for all} \ x \in  B_{\frac{r_{1}}{2}}(x_{0}).
\end{eqnarray*}
For $r_{3} = \min\left\lbrace \frac{r_{1}}{4}, \frac{r_{2}}{8} \right\rbrace$ we apply the Lemma \ref{delta - Harnack inequality} in $B_{2r_{3}}(x_{0})$ and we obtain

\begin{eqnarray}
\label{NEW 3}
u\left( x \right) - p\left( x \right)  \geq  c_{0}(u\left( x_{0} \right) - p\left( x_{0} \right)) - 4r_{3} \geq \frac{c_{0}\varepsilon}{2} - 4r_{3}
\end{eqnarray}
for all $x \in B_{r_{3}}(x_{0})$. From \eqref{NEW 1} and \eqref{NEW 3} we can write
\begin{eqnarray*}
\label{NEW 4}
\frac{c_{0}\varepsilon}{2} - 4r_{3} & \leq & u\left( x \right) - p\left( x \right) \\ \nonumber 
& = & u((x -r_{2}e_{n}) + r_{2}e_{n}) -p ((x -r_{2}e_{n}) + r_{2}e_{n}) \\ \nonumber 
& = & u((x -r_{2}e_{n}) + r_{2}e_{n}) - p ((x -r_{2}e_{n})) - r_{2} \\ \nonumber 
& \leq &  u ((x -r_{2}e_{n})) - p ((x -r_{2}e_{n})) + \dfrac{r_{2}}{2} - r_{2},
\end{eqnarray*}
for all $x \in B_{r_{3}}(x_{0})$. Thus, we find
\begin{eqnarray}
\label{NEW 5}
\frac{c_{0}\varepsilon}{2} & \leq & \frac{c_{0}\varepsilon}{2} - 4r_{3} +   \dfrac{r_{2}}{2} \\ \nonumber 
& \leq &  u (x) - p(x),
\end{eqnarray} 
for all $x \in B_{r_{3}}(\overline{x}_{0})$, where $\overline{x}_{0} = x_{0} - r_{2}e_{n}$.

 Let $w \colon \overline{D} \rightarrow \mathbb{R}$ be defined by
\begin{eqnarray}
\label{l 5.2.6}
	w\left( x \right) = c\left( \vert x - \overline{x}_{0} \vert^{-\gamma} - \left( \frac{4}{5}\right)^{-\gamma} \right),
\end{eqnarray}
where $D:= B_{\frac{4}{5}}\left( \overline{x}_{0} \right)\setminus \overline{B}_{r_{3}}\left( \overline{x}_{0} \right)$. We choose $c=c(n, \mu) > 0$ such that 
\begin{eqnarray}
\label{l 5.2.7}
w =	\left \{ 
\begin{array}{lll}
0, & \text{on} & \partial B_{\frac{4}{5}}\left( \overline{x}_{0} \right), \\
1, & \text{on} & \partial B_{r_{3}}\left( \overline{x}_{0}\right). 
\end{array}
\right.
\end{eqnarray} Now define
\begin{eqnarray}
\label{l 5.2.15}
v\left( x \right) = p\left( x \right) + \frac{c_{0}\varepsilon}{2} \left( w\left( x \right) - 1 \right), \ \ x \in \overline{B}_{\frac{4}{5}}\left( \overline{x}_{0} \right),
\end{eqnarray}
and for $t \geq 0$,
\begin{eqnarray}
\label{l 5.2.16}
v_{t}\left( x \right) = v\left( x \right) + t, \ \ x \in \overline{B}_{\frac{4}{5}}\left( \overline{x}_{0} \right).
\end{eqnarray}
By choice of $c$ we have $w \leq 1$ in $D$. Then, extending $w$ to $1$ in $B_{r_{3}}\left( \overline{x}_{0} \right)$ we find
\begin{eqnarray}
\label{l 5.2.17}
v_{0}\left( x \right) = v\left( x \right) \leq p\left( x \right) \leq u \left( x \right), \ \ x \in \overline{B}_{\frac{4}{5}}\left( \overline{x}_{0} \right).
\end{eqnarray}
Consider 
$$
	t_{0}= \sup \left\lbrace t \geq 0 : v_{t} \leq u \ \ \mbox{in} \ \ \overline{B}_{\frac{4}{5}}\left( \overline{x}_{0} \right) \right\rbrace.
$$ 
Assume, for the moment, that we have already verified $t_{0} \geq \frac{c_{0}\varepsilon}{2}$. From definition of $v$ we have
\begin{eqnarray*}
\label{}
u\left( x \right) \geq v \left( x \right) + t_{0} \geq p\left( x \right) + \frac{c_{0}\varepsilon}{2} w\left( x \right), \ \ \forall x \in B_{\frac{4}{5}}\left( \overline{x}_{0} \right).    
\end{eqnarray*}
Notice that $ B_{\frac{1}{2}}\left( 0\right) \subset  B_{\frac{3}{5}}\left( \overline{x}_{0} \right)$ and
\begin{eqnarray*}
\label{}
w\left( x \right)  \geq	\left \{ 
\begin{array}{lll}
c \left [ \left( \frac{3}{5}\right)^{-\gamma} - \left( \frac{4}{5}\right)^{-\gamma} \right ] , & \text{in} & B_{\frac{3}{5}}\left( \overline{x}_{0} \right) \setminus B_{r_{3}}\left( \overline{x}_{0} \right), \\
1, & \text{on} &  B_{r_{3}}\left( \overline{x}_{0} \right). 
\end{array}
\right.  
\end{eqnarray*}
Hence, we conclude ($\varepsilon$ small) that
\begin{eqnarray*}
\label{}
u\left( x \right) - p\left( x \right) & \geq & \nonumber c_{1} \varepsilon, \ \ \mbox{in} \ \ B_{1/2}\left( 0 \right),
\end{eqnarray*}
and the result is proved. Let us now prove that indeed $t_{0} \geq \frac{c_{0}\varepsilon}{2}$. For that, we suppose for the sake of contradiction that $t_{0} < \frac{c_{0}\varepsilon}{2}$. Then there would exist $y_{0} \in \overline{B}_{\frac{4}{5}}\left( \overline{x}_{0} \right)$ such that
\begin{eqnarray*}
\label{}
v_{t}\left( y_{0} \right) = u\left( y_{0} \right).
\end{eqnarray*}  
In the sequel, we show that $y_{0} \in B_{r_{3}}\left( \overline{x}_{0}\right)$. From definition of $v_{t}$ and by the fact that $w$ has zero boundary data on $\partial B_{4/5}\left( \overline{x}_{0} \right)$ we have
\begin{eqnarray*}
\label{}
 	v_{t}= p - \frac{c_{0}\varepsilon}{2} + t_{0} < u \ \ \mbox{in} \ \ \partial B_{4/5}\left( \overline{x}_{0} \right),
\end{eqnarray*}
where we have used that $u \geq p$ and $ t_{0} < \frac{c_{0}\varepsilon}{2}$. We compute directly,
\begin{eqnarray}
\label{l 5.2.8}
\\
\partial_{i}w = \nonumber -\gamma \left( x_{i} - \overline{x}^{i}_{0} \right)\vert x - \overline{x}_{0} \vert^{-\gamma - 2} 
\end{eqnarray}
and
\begin{eqnarray}
\label{l 5.2.9}
\\
\partial_{ij}w = \nonumber \gamma \vert x - \overline{x}_{0} \vert^{-\gamma - 2} \left\lbrace \left( \gamma + 2 \right) \left( x_{i} -  \overline{x}^{i}_{0} \right)\left( x_{j} - \overline{x}^{j}_{0} \right)\vert x - \overline{x}_{0} \vert^{- 2} - \delta_{ij} \right\rbrace.
\end{eqnarray}
Moreover, there exists $C=C(n, \mu, \gamma) > 1$ such that $\vert \nabla w \vert \leq C$ in $D$. Then, if $ \varepsilon > 0$ is small we have in $D$
\begin{eqnarray}
\label{NEW 7}
\vert e_{n} + (c_{0}/2) \varepsilon \nabla w \vert^{\mu} \geq \left( \dfrac{1}{2} \right)^{\mu}. 
\end{eqnarray} 
Thus, if $\gamma = \gamma (n) > 1$ is large, from \eqref{l 5.2.9} and \eqref{NEW 7} we find
\begin{eqnarray*}
\label{}
\vert \nabla v_{t} \vert^{\kappa} \Delta v_{t} & \ge & \gamma \varepsilon c_{} \left( \dfrac{1}{2} \right)^{\mu} (5/4)^{\gamma + 2} \left\lbrace \left( \gamma + 2 \right) - n  \right\rbrace \\ \nonumber
& \geq & \delta_{0} \varepsilon  \\ \nonumber
& \geq & \varepsilon^{2}\\ \nonumber
&\geq & f,
\end{eqnarray*} 
where $\delta_{0} = \delta_{0} (n, \mu) >0$. On the other hand, we have
\begin{eqnarray}
\label{t 5.2.18}
\vert \nabla v_{t_{0}} \vert \geq \vert \partial_{n}v \vert = \vert 1 + (c_{0}/2) \varepsilon \partial_{n}w \vert, \ \  \mbox{in} \ \  D.
\end{eqnarray} 
By radial symmetry of $w$, we have
\begin{eqnarray}
\label{t 5.2.19}
\partial_{n}w\left( x \right)  = \vert \nabla w\left( x \right) \vert \langle \nu_{x}, e_{n}\rangle , \ \ x \in D,
\end{eqnarray}  
where $\nu_{x}$ is the unit vector in the direction of $x - \overline{x}_{0}$. From (\ref{l 5.2.8}) we have
\begin{eqnarray*}
\label{}
\vert \nabla w \vert^{} & = & c^{}_{}\gamma^{} \vert x - \overline{x}_{0} \vert^{-\left( \gamma + 2 \right) } \vert x - \overline{x}_{0} \vert^{} \\ \nonumber
& = & c^{}_{} \gamma^{} \vert x - \overline{x}_{0} \vert^{-\left( \gamma + 1 \right) } \\ 
& \geq &c_{6} > 0, \ \ \ \ \mbox{in} \ \ D.
\end{eqnarray*}
Also we have $\langle \nu_{x}, e_{n}\rangle \geq c$ in $\left\lbrace v_{t_{0}} \leq 0 \right\rbrace \cap D$ (for $\varepsilon$ small enough). In fact, if $\varepsilon$ is small enough 
\begin{eqnarray*}
\label{}
\left\lbrace v_{t_{0}} \leq 0 \right\rbrace \cap D \subset \left\lbrace p \leq \frac{c_{0}\varepsilon}{2} \right\rbrace = \left\lbrace x_{n} \leq \frac{c_{0}\varepsilon}{2} - \sigma \right\rbrace \subset \left\lbrace x_{n} < 1/20 \right\rbrace.
\end{eqnarray*} 
We therefore conclude that 
\begin{eqnarray*}
\label{}
\langle \nu_{x}, e_{n}\rangle & = & \nonumber \frac{1}{\vert \overline{x}_{0} - x \vert}\langle \overline{x}_{0} - x, e_{n} \rangle \\ \nonumber
& \geq & \frac{5}{4}\langle \overline{x}_{0} - x, e_{n} \rangle  \\ \nonumber
& = & \frac{5}{4}\left(  \frac{1}{10} - r_{2} - x_{n} + \frac{1}{20} - \frac{1}{20}\right) \\ \nonumber
& > & c_{7}, \ \ \ \ \mbox{in} \ \ \left\lbrace v_{t_{0}} \leq 0 \right\rbrace \cap D.
\end{eqnarray*}
From (\ref{t 5.2.18}) and (\ref{t 5.2.19}) we obtain
\begin{eqnarray*}
\vert \nabla v_{t_{0}} \vert^{2} & \geq & \vert \partial_{n} v_{t_{0}}\vert^{2} \\  \nonumber 
& = & 1 + 2\tilde{c} \varepsilon^{}  + \tilde{c}  \varepsilon^{2}\vert \nabla w \vert^{2} \\  \nonumber 
& \geq & 1 + 2c_{9}\varepsilon  + c_{10}\varepsilon^{2}  \\  \nonumber 
&  \geq & 1 + \varepsilon^{2}.
\end{eqnarray*}
Hence
\begin{eqnarray*}
\vert \nabla v_{t_{0}} \vert^{2}  \geq 1 + \varepsilon^{2} > Q^2 \ \ \mbox{in} \ \ D \cap \mathfrak{F}\left( v_{t_{0}} \right).
\end{eqnarray*}
in $\left\lbrace v_{t_{0}} \leq 0 \right\rbrace \cap D$.
In particular, we have
\begin{eqnarray*}
\vert \nabla v_{t_{0}} \vert  > Q \ \ \mbox{in} \ \ D \cap \mathfrak{F}\left( v_{t_{0}} \right).
\end{eqnarray*}
Thus, $v_{t_{0}}$ is a strict subsolution in $D$ and by Lemma \ref{l 5.1} ($u$ is a viscosity solution of problem (\ref{P 5.1. introduc}) in $B_{1}\left( 0 \right)$) we conclude that $y_{0} \in B_{r_{3}}\left( \overline{x}_{0} \right)$. This is a contradiction. In fact, we would get
\begin{eqnarray*}
u\left( y_{0} \right) = v_{t_{0}}\left( y_{0} \right) = v\left( y_{0} \right) + t_{0} \leq p\left( y_{0} \right) + t_{0} < p\left( y_{0} \right) + \dfrac{c_{0}\varepsilon}{2}.
\end{eqnarray*}
which drives us to a contradiction to (\ref{NEW 5}). The Lemma \ref{Harnack lemma 5.2} is concluded.

\item[Case 2]: $\vert \nabla u (x_{0}) \vert \geq \frac{1}{4}$.\\

Since $u$ is $C^{1, \alpha}$ in $B_{\frac{1}{40}}(x_{0})$, there exists a constant $r_{0}=r_{0}(n, \mu) > 0$ such that  
\begin{eqnarray}
\label{NEW 2.1}
\vert \nabla u \vert \geq \dfrac{1}{8} \quad \text{in} \ B_{r_{0}}(x_{0}).
\end{eqnarray}
Then, $u$ satisfies 
\begin{eqnarray}
\label{NEW 2.2}
 \Delta u =  g  \quad \text{in} \ B_{r_{0}}(x_{0}),
\end{eqnarray}
where $g = \dfrac{f}{ \vert \nabla u \vert}$ with $\Vert g \Vert_{\infty} \leq \varepsilon^{2} 8^{\mu}$. Thus, by classical Harnack Inequality we obtain
\begin{eqnarray*}
\label{}
u\left( x \right) - p\left( x \right)  & \geq & c_{0}(u\left( x_{0} \right) - p\left( x_{0} \right)) - C \Vert f \Vert_{\infty} \\
& \geq & \frac{c_{0}\varepsilon}{2} - C_{1}\varepsilon^{2} \\
& \geq & c_{1}\varepsilon,
\end{eqnarray*}
for all $x \in B_{1/40}(x_{0})$, if $\varepsilon >0$ is sufficiently small. Now, we consider the barrie
\begin{eqnarray*}
\label{}
w\left( x \right) = \left \{ 
\begin{array}{lll}
c \left [ \vert x - x_{0} \vert^{-\gamma} - \left( \frac{4}{5}\right)^{-\gamma} \right ] , & \text{in} & B_{\frac{4}{5}}\left( x_{0} \right) \setminus B_{1/40}\left( x_{0} \right), \\
1, & \text{on} &  B_{1/40}\left( x_{0} \right), 
\end{array}
\right.  
\end{eqnarray*}
and the Lemma \ref{Harnack lemma 5.2} follows as in Case 1.
\end{proof}

\bigskip 

 Now we establish the main tool in the proof of Theorem \ref{th 5.1. introd.}.

\begin{theorem}
\label{t 5.2.1} 
Let $u$ be a viscosity solution to (\ref{P 5.1. introduc}) in $\Omega$ under assumptions (\ref{t 5.2.b})--(\ref{t 5.2.d}). There exists a universal constant $\tilde{\varepsilon} > 0$ such that, if $u$ satisfies at some $x_{0} \in \Omega^{+}\left( u \right) \cup F\left( u \right)$, 
\begin{eqnarray}
\label{t 5.2.2}
\left( x_{n} + a _{0}\right)^{+} \leq u \left( x \right) \leq \left( x_{n} + d_{0} \right)^{+} \ \ \mbox{in} \ \ B_{r}\left( x_{0} \right) \subset \Omega,
\end{eqnarray} 
with $\vert a_{0} \vert < \frac{1}{20}$ and
\begin{eqnarray}
\label{t 5.2.3}
d_{0} - a_{0} \leq \varepsilon r, \ \ \ \varepsilon \leq \tilde{\varepsilon} 
\end{eqnarray}
then
\begin{eqnarray}
\label{t 5.2.4}
\left( x_{n} + a _{1}\right)^{+} \leq u \left( x \right) \leq \left( x_{n} + d_{1} \right)^{+} \ \ \mbox{in} \ \ B_{\frac{r}{40}}\left( x_{0} \right)
\end{eqnarray}
with
\begin{eqnarray}
\label{t 5.2.5}
a_{0} \leq a_{1} \leq d_{1} \leq d_{0}, \ \ \ d_{1} - a_{1} \leq \left( 1 - c \right)\varepsilon r, 
\end{eqnarray} 
and $0 < c < 1$ universal.
\end{theorem}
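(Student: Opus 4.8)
\textbf{Proof proposal for Theorem \ref{t 5.2.1}.}

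The plan is to reduce the statement to a single application of the Harnack-type inequality of Lemma \ref{Harnack lemma 5.2} by rescaling, exactly in the spirit of \cite{DeSilva}. First I would normalize: set
\[
    \tilde{u}(x) = \frac{u(x_0 + r x) - \text{(shift)}}{r}, \qquad \tilde{p}(x) = x_n + \sigma,
\]
choosing the affine shift so that the hypothesis \eqref{t 5.2.2}--\eqref{t 5.2.3} becomes precisely the normalized flatness $\tilde{p}^{+} \le \tilde{u} \le (\tilde{p} + \varepsilon)^{+}$ in $B_1(0)$ with $|\sigma| < 1/20$. The point is that $\mathfrak{L}_{\mu}$ is homogeneous under the scaling $v \mapsto v(rx)/r$ (it only multiplies $f$ by $r$, which keeps the smallness hypotheses \eqref{t 5.2.b}--\eqref{t 5.2.d} valid after possibly shrinking $\tilde\varepsilon$), and the free boundary condition $|\nabla u| = Q$ is scale-invariant; so $\tilde{u}$ is again a viscosity solution to a problem of the form \eqref{P 5.1. introduc} with the same structural constants. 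Here one must be a little careful about the case $x_0 \in \mathfrak{F}(u)$ versus $x_0 \in \Omega^+(u)$: in the latter case the affine function $x_n + a_0$ may be strictly positive on part of $B_r(x_0)$, but the argument of Lemma \ref{Harnack lemma 5.2} only uses the one-sided trapping between the two translates of $(x_n+\sigma)^+$, so it goes through unchanged.

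Next I would evaluate $\tilde u$ at the point $x_0' = \frac{1}{10} e_n$. By the dichotomy, either $\tilde u(x_0') \ge (\tilde p(x_0') + \varepsilon/2)^+$ or $\tilde u(x_0') \le (\tilde p(x_0') + \varepsilon/2)^+$. In the first case Lemma \ref{Harnack lemma 5.2} gives $\tilde u \ge (\tilde p + c\varepsilon)^+$ in $\overline{B}_{1/2}(0)$, which says precisely that the lower barrier can be lifted: the new $\tilde a_1 = \sigma + c\varepsilon$ while $\tilde d_1 = \sigma + \varepsilon$, so $\tilde d_1 - \tilde a_1 = (1-c)\varepsilon$ on $B_{1/2}(0)$. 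In the second case Lemma \ref{Harnack lemma 5.2} gives $\tilde u \le (\tilde p + (1-c)\varepsilon)^+$ in $\overline B_{1/2}(0)$, so now the upper barrier drops and again the oscillation contracts to $(1-c)\varepsilon$. In either case one has $\tilde a_0 \le \tilde a_1 \le \tilde d_1 \le \tilde d_0$ with $\tilde d_1 - \tilde a_1 \le (1-c)\varepsilon$, all valid on $B_{1/2}(0)$, hence a fortiori on $B_{1/40}(0)$. Scaling back by $x = x_0 + r\,(\cdot)$ and multiplying by $r$ turns this into \eqref{t 5.2.4}--\eqref{t 5.2.5} on $B_{r/40}(x_0)$.

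The main obstacle, and the only genuinely non-bookkeeping point, is verifying that after the rescaling the hypotheses of Lemma \ref{Harnack lemma 5.2} are met uniformly — in particular that the two smallness conditions \eqref{t 5.2.b}--\eqref{t 5.2.d} survive. For $\|f\|_\infty$ this is automatic since $r \le 1$ makes $r\|f\|_\infty$ no larger. For $Q$ one must check that the rescaled obstacle $\tilde Q(x) = Q(x_0 + rx)$ still satisfies $\|\tilde Q - 1\|_\infty \le \varepsilon^2$ and $[\tilde Q]_{C^{0,\alpha}} \le \varepsilon^2$; the first follows from the hypothesis at hand, and the Hölder seminorm only improves under the contraction $x \mapsto rx$ since $r<1$. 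The remaining subtlety is that Lemma \ref{Harnack lemma 5.2} is stated for solutions on $B_1(0)$ centered so that $x_0 = \frac{1}{10}e_n$ lies in the positive phase; one checks $B_{1/20}(\frac{1}{10}e_n) \subset B_1^+(\tilde u)$ using $|\sigma| < 1/20$ and the lower bound $\tilde u \ge \tilde p^+$, exactly as in \eqref{l 5.2.11}. Once these compatibility checks are in place the theorem is immediate, and iterating it (geometric decay of the flatness over dyadic-type balls $B_{r/40^k}$) is what yields the $C^{1,\beta}$ regularity of $\mathfrak{F}(u)$ in the proof of Theorem \ref{th 5.1. introd.}.
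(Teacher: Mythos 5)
Your proposal is correct and follows essentially the same route as the paper: normalize to $x_0=0$, $r=1$, trap $u$ between $p^+$ and $(p+\varepsilon)^+$ with $p(x)=x_n+a_0$, and apply the dichotomy at $\tfrac{1}{10}e_n$ together with Lemma \ref{Harnack lemma 5.2} to contract the oscillation, then scale back. The extra compatibility checks you spell out (scaling of $f$, $Q$, and the inclusion of the ball around $\tfrac{1}{10}e_n$ in the positive phase) are exactly what the paper's terse "without loss of generality" implicitly relies on.
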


\begin{proof}
With no loss of generality, we can assume  $x_{0}=0$ and $r=1$. We put $p\left( x \right) = x_{n} + a_{0}$ and by (\ref{t 5.2.2}) 
\begin{eqnarray}
\label{t 5.2.6}
p^{+}\left( x \right) \leq u \left( x \right) \leq \left( p\left( x \right) + \varepsilon \right)^{+} \ \  \left( d_{0} \leq a_{0} + \varepsilon \right). 
\end{eqnarray}
Then, since 
$$u \left( \frac{1}{10}e_{n} \right) \geq \left( p\left( \frac{1}{10}e_{n} \right) + \frac{\varepsilon}{2} \right)^{+} \quad \text{or} \quad u \left( \frac{1}{10}e_{n} \right) < \left( p\left( \frac{1}{10}e_{n} \right) + \frac{\varepsilon}{2} \right)^{+}$$ we can apply Lemma \ref{Harnack lemma 5.2} to obtain the result. 
\vspace{0,5 cm}\\
\end{proof}

\bigskip

From Harnack  inequality, Theorem \ref{t 5.2.1}, precisely as in \cite{DeSilva}, we obtain the following key estimate for flatness improvement.
\begin{corollary}
\label{c 5.1}
Let $u$ be a viscosity solution to (\ref{P 5.1. introduc}) in $\Omega$ under assumptions (\ref{t 5.2.b})--(\ref{t 5.2.d}). If $u$ satisfies (\ref{t 5.2.2}) then in $B_{1}\left( x_{0} \right)$ the function $\tilde{u}_{\varepsilon}:= \frac{u - x_{n}}{\varepsilon}$ has a H\"older modulus of continuity at $X_{0}$ outside of ball of radius $\varepsilon / \tilde{\varepsilon}$, i.e. for all $x \in \left( \Omega^{+}\left( u \right) \cup \mathfrak{F}\left( u \right) \right)\cap B_{1}\left( x_{0} \right)$ with $\vert x - x_{0} \vert \geq \varepsilon / \tilde{\varepsilon}$ 
\begin{eqnarray*}
\label{}
\vert \tilde{u}_{\varepsilon}\left( x \right) - \tilde{u}_{\varepsilon}\left( x_{0} \right)\vert \leq C \vert x - x_{0} \vert^{\gamma}. 
\end{eqnarray*}    
\end{corollary}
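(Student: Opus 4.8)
The plan is to derive Corollary \ref{c 5.1} from the Harnack-type iteration in Theorem \ref{t 5.2.1} by a dyadic iteration in exactly the same way as in \cite{DeSilva}. First I would fix a universal $\tilde\varepsilon$ so that Theorem \ref{t 5.2.1} applies, and set $p_0(x) = x_n + a_0$, so that hypothesis \eqref{t 5.2.2} reads $(x_n + a_0)^+ \le u \le (x_n + d_0)^+$ in $B_r(x_0)$ with $d_0 - a_0 \le \varepsilon r$ and $|a_0| < 1/20$. By rescaling I reduce to $x_0 = 0$, $r = 1$. The goal is to produce, for each $k \ge 0$ with $40^{-k} \ge \varepsilon/\tilde\varepsilon$, real numbers $a_k \le d_k$ with
\begin{eqnarray*}
(x_n + a_k)^+ \le u(x) \le (x_n + d_k)^+ \quad \text{in } B_{40^{-k}}(x_0),
\end{eqnarray*}
and $d_k - a_k \le (1-c)^k \varepsilon \, 40^{-k}$, together with monotonicity $a_{k} \le a_{k+1} \le d_{k+1} \le d_k$. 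The point is that at each stage the normalized oscillation $\varepsilon_k := (d_k - a_k)/40^{-k}$ satisfies $\varepsilon_k \le (1-c)^k \varepsilon \le \tilde\varepsilon$, so Theorem \ref{t 5.2.1} is applicable at scale $40^{-k}$ with $\varepsilon_k$ in place of $\varepsilon$ — provided one also checks that the small-coefficient hypotheses \eqref{t 5.2.b}--\eqref{t 5.2.d} persist under this dyadic rescaling (they do, since rescaling $u \mapsto u(\rho x)/\rho$ only decreases $\|f\|_\infty$ and $\|Q-1\|_\infty$, using the homogeneity of $\mathfrak{L}_\mu$ under $u \mapsto u(\rho x)/\rho$).

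The induction step is a direct application of Theorem \ref{t 5.2.1}: given the inclusion at scale $40^{-k}$, rescale to the unit ball, apply the theorem to get improved affine bounds with slopes $a_{k+1}, d_{k+1}$ in $B_{1/40}$ satisfying $a_k \le a_{k+1} \le d_{k+1} \le d_k$ and $d_{k+1} - a_{k+1} \le (1-c)\varepsilon_k \cdot 40^{-1}$, then rescale back. One must verify that $|a_{k+1}| < 1/20$ remains valid so that the theorem can be re-applied at the next scale: since $|a_0| < 1/20$ and the increments $a_{k+1} - a_k$ are bounded by $d_k - a_k \le (1-c)^k \varepsilon \cdot 40^{-k}$, the total drift $\sum_k (1-c)^k \varepsilon \cdot 40^{-k}$ is controlled by a universal constant times $\varepsilon$, hence $< 1/40$ for $\varepsilon \le \tilde\varepsilon$ small; this keeps $|a_k| < 1/20$ throughout.

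Once the dyadic family $\{a_k, d_k\}$ is constructed, the Hölder bound for $\tilde u_\varepsilon = (u - x_n)/\varepsilon$ follows by a standard argument: fix $x$ with $|x - x_0| \ge \varepsilon/\tilde\varepsilon$ and pick $k$ with $40^{-(k+1)} \le |x - x_0| < 40^{-k}$; since $x \in B_{40^{-k}}(x_0)$ we have $a_k \le u(x) - x_n \le d_k$ when $u(x) > 0$ (and a matching statement on $\mathfrak{F}(u)$, using that $(x_n + a_k)^+ \le u$ forces $x_n \le -a_k$ there), and likewise $a_k \le u(x_0) - x_n^{(0)} \le d_k$ at $x_0$; hence $|\tilde u_\varepsilon(x) - \tilde u_\varepsilon(x_0)| \le (d_k - a_k)/\varepsilon \le (1-c)^k 40^{-k} \le C (40^{-k})^{1+\gamma}/40^{-k} \cdot (\text{const})$, where $\gamma := \log(1/(1-c))/\log 40 \in (0,1)$, so that $(1-c)^k = 40^{-\gamma k} \le C|x-x_0|^\gamma$. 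Choosing $\gamma$ this way gives $|\tilde u_\varepsilon(x) - \tilde u_\varepsilon(x_0)| \le C|x-x_0|^\gamma$ with $C$ universal.

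The main obstacle is the bookkeeping at the free boundary: at a generic scale the ball $B_{40^{-k}}(x_0)$ need not be contained in $\Omega^+(u)$, so ``$u(x) - x_n$ lies between $a_k$ and $d_k$'' must be read through the $(\cdot)^+$ truncation, and one must argue that on $\mathfrak{F}(u) \cap B_{40^{-k}}(x_0)$ the quantity $u(x) - x_n = -x_n$ still lies in $[a_k, d_k]$ — this is exactly where the two-sided flatness $(x_n+a_k)^+ \le u \le (x_n + d_k)^+$ is used, forcing $-d_k \le -x_n$ and (from $u(x) = 0$) $x_n + a_k \le 0$. The remaining care is purely in tracking that the universal constants $\tilde\varepsilon, c, \gamma$ do not degrade along the iteration, which they do not because Theorem \ref{t 5.2.1} delivers $c$ and the admissible $\tilde\varepsilon$ universally, independent of the scale. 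This is carried out verbatim as in \cite{DeSilva}; the only genuinely new input relative to that reference is Theorem \ref{t 5.2.1} itself, whose proof accounts for the degeneracy of $\mathfrak{L}_\mu$.
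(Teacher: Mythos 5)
Your overall route is the one the paper intends (the paper itself gives no argument for Corollary \ref{c 5.1} beyond ``precisely as in \cite{DeSilva}''): iterate Theorem \ref{t 5.2.1} at the dyadic scales $40^{-k}$, keep track of the affine bounds $a_k\le d_k$, stop when $40^{-k}$ reaches $\varepsilon/\tilde{\varepsilon}$, and convert the geometric decay of $d_k-a_k$ into a H\"older modulus for $\tilde u_\varepsilon$ at $x_0$. However, your bookkeeping of the per-step gain overstates what Theorem \ref{t 5.2.1} delivers, so the induction as written cannot be closed. The theorem's conclusion is $d_1-a_1\le (1-c)\varepsilon r$ with the \emph{same} $r$ that appears in the hypothesis $d_0-a_0\le \varepsilon r$: the oscillation improves only by the factor $(1-c)$ while the ball shrinks by the factor $40$. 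Hence the correct recursion is $d_{k+1}-a_{k+1}\le (1-c)(d_k-a_k)$, i.e. $d_k-a_k\le (1-c)^{k}\varepsilon$ in $B_{40^{-k}}(x_0)$, and \emph{not} $d_k-a_k\le (1-c)^{k}\varepsilon\,40^{-k}$ as you claim; your rescaled step ``$d_{k+1}-a_{k+1}\le (1-c)\varepsilon_k\cdot 40^{-1}$'' inserts a spurious factor $40^{-1}$ (if that decay were available, $\tilde u_\varepsilon$ would come out with a $C^{1,\gamma}$-type modulus $|x-x_0|^{1+\gamma}$ at $x_0$, which is more than the Harnack iteration can give).

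The same mis-accounting hides the real reason for the cutoff radius $\varepsilon/\tilde{\varepsilon}$: the relative flatness $\varepsilon_k:=40^{k}(d_k-a_k)$ is not $\le(1-c)^k\varepsilon$; it satisfies only $\varepsilon_k\le (40(1-c))^{k}\varepsilon$ and in general grows, and Theorem \ref{t 5.2.1} remains applicable exactly as long as $40^{k}\varepsilon\le\tilde{\varepsilon}$, i.e. $40^{-k}\ge \varepsilon/\tilde{\varepsilon}$ -- under your claimed decay no such restriction would be needed at all, which should have been a red flag. (With $\varepsilon_k\ge\varepsilon$ the coefficient hypotheses \eqref{t 5.2.b}--\eqref{t 5.2.d} are automatic at every step, so no separate scaling argument is needed there.) Once corrected, your final computation goes through essentially verbatim: for $40^{-(k+1)}\le|x-x_0|<40^{-k}$, using the positive-part reading of the bounds on $\mathfrak{F}(u)$ (which you treat correctly), one gets $|\tilde u_\varepsilon(x)-\tilde u_\varepsilon(x_0)|\le (d_k-a_k)/\varepsilon\le (1-c)^{k}=\left(40^{-k}\right)^{\gamma}\le C|x-x_0|^{\gamma}$ with $\gamma=\ln\left(\tfrac{1}{1-c}\right)/\ln 40$, which is the statement of the corollary. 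A minor further caution: your drift estimate keeps $|a_k|<1/20$ only if $|a_0|$ is not within $O(\varepsilon)$ of $1/20$; this slack has to be absorbed in the choice of universal constants, as in \cite{DeSilva}.
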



\section{Improvement of Flatness} \label{section4}

In this section we prove the \textit{improvement of flatness} lemma, from which the proof of main Theorem \ref{th 5.1. introd.} will follow via an interactive argument. Next we present the basic induction step towards $C^{1,\gamma}$ regularity at $0$.


\begin{theorem}[Improvement of flatness]\label{lemma2N}
Let  $u \in C(\Omega)$ be a viscosity solution to

\begin{eqnarray}
\label{(4.0.1)}
	\left \{ 
		\begin{array}{lll}
			\mathcal{L}_{\mu} u  =  f, & \text{ in } & \Omega_{+}\left( u \right),\\
			|\nabla u| = Q, & \text{ on } & \mathfrak{F}(u). 
		\end{array}
	\right.
\end{eqnarray}
whith  ($0 < \epsilon <1$)
\begin{equation}\label{(4.0.2N)}
\max \, \left\{ \|f\|_{L^{\infty}(\Omega)} , \,\,  \|Q-1\|_{L^{\infty}(\Omega)} \right\} \le \epsilon^{2}.
\end{equation}
Suppose that $u$ satisfies
\begin{equation}\label{(4.1N)}
    (x_n - \epsilon)^{+} \le u(x) \le (x_n + \epsilon)^{+} \quad \textrm{for}\,\, x \in B_1
\end{equation}
with $0 \in \mathfrak{F}(u)$. If $0 <r \le r_0$ for $r_0$ a universal constant and $0 < \epsilon \le \epsilon_0$ for some $\epsilon_{0}$ depending on $r$, then
\begin{equation}\label{(4.2)}
    \left(\langle x , \nu \rangle - r \frac{\epsilon}{2}\right)^{+} \le u(x) \le \left(\langle x , \nu \rangle + r \frac{\epsilon}{2}\right)^{+} \quad x \in B_r,
\end{equation}
with $|\nu| =1$, and $|\nu-e_n| \le C \epsilon^2$ for a universal constant $C>0$.
\end{theorem}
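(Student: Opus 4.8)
The plan is to follow the compactness/blow-up scheme of De Silva, adapted to the degenerate operator $\mathfrak{L}_\mu$. We argue by contradiction: suppose the statement fails for some fixed $r \le r_0$. Then there is a sequence of viscosity solutions $u_k$ to \eqref{(4.0.1)} with right-hand sides $f_k$ and free boundary speeds $Q_k$ satisfying $\max\{\|f_k\|_\infty, \|Q_k-1\|_\infty\} \le \epsilon_k^2$, with $\epsilon_k \to 0$, $0 \in \mathfrak{F}(u_k)$, and the flatness hypothesis $(x_n-\epsilon_k)^+ \le u_k \le (x_n+\epsilon_k)^+$ in $B_1$, but for which the conclusion \eqref{(4.2)} fails for every choice of unit vector $\nu$. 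Introduce the normalized sequence
\[
\tilde u_k(x) := \frac{u_k(x) - x_n}{\epsilon_k}, \qquad x \in \big(\Omega^+(u_k) \cup \mathfrak{F}(u_k)\big) \cap B_1 .
\]
By the flatness hypothesis $|\tilde u_k| \le 1$ on its domain, and $0 \in \mathfrak{F}(u_k)$ forces $\tilde u_k(0)$ to be controlled.

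The first main step is a compactness statement: the graphs of $\tilde u_k$ converge (along a subsequence, in the Hausdorff distance on compact subsets of $B_1$) to the graph of a Hölder continuous function $\tilde u_\infty$ defined on $B_{1/2} \cap \{x_n \ge 0\}$. This is where Corollary \ref{c 5.1} does the work: it gives a uniform (in $k$) Hölder modulus of continuity for $\tilde u_k$ at points outside the shrinking ball of radius $\epsilon_k/\tilde\varepsilon$, which is exactly the equicontinuity needed to pass to a uniform limit; simultaneously the flatness bounds force $\Omega^+(u_k) \cap B_{1/2} \to B_{1/2} \cap \{x_n > 0\}$ and $\mathfrak{F}(u_k) \cap B_{1/2} \to \{x_n = 0\}$ in Hausdorff distance. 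This part is essentially identical to \cite{DeSilva} once Corollary \ref{c 5.1} is in hand.

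The second main step is to identify the limit $\tilde u_\infty$ as a viscosity solution of the linearized (Neumann) problem \eqref{const}, i.e. $\Delta \tilde u_\infty = 0$ in $B_{1/2}^+$ and $\partial_\nu \tilde u_\infty = 0$ on $\Upsilon_{1/2}$, in the sense of Definition \ref{]defVisc}. The interior part: if a quadratic polynomial $P$ touches $\tilde u_\infty$ strictly from below at an interior point $x_0$ with $x_{n,0}>0$, then $\epsilon_k P + x_n$ can be made to touch $u_k$ from below at nearby points $x_k \to x_0$ (standard perturbation of the touching, using uniform convergence), and there $u_k$ is a classical-viscosity subsolution of $\mathfrak{L}_\mu u_k = f_k$. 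Here is where the degeneracy must be handled: one computes $\mathfrak{L}_\mu(\epsilon_k P + x_n) = |\nabla(\epsilon_k P) + e_n|^\mu \, \epsilon_k \Delta P$, and since $|\nabla(\epsilon_k P)| \to 0$ the factor $|\nabla(\epsilon_k P)+e_n|^\mu \to 1$; dividing the subsolution inequality $|\epsilon_k \nabla P + e_n|^\mu \epsilon_k \Delta P \le f_k$ by $\epsilon_k |\epsilon_k \nabla P+e_n|^\mu$ and using $\|f_k\|_\infty/\epsilon_k \le \epsilon_k \to 0$ yields $\Delta P \le o(1)$, hence $\Delta P(x_0) \le 0$ in the limit. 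The boundary part: using Remark \ref{remark} it suffices to test with polynomials $\tilde P$ having $\Delta \tilde P > 0$; if such a $\tilde P$ touched $\tilde u_\infty$ from below at $x_0 \in \Upsilon_{1/2}$ with $\partial_\nu \tilde P(x_0) > 0$, then the comparison subsolution $\phi_k := x_n + \epsilon_k \tilde P$ (suitably translated downward and with a small quadratic correction to make it a \emph{strict} subsolution in the sense of Definition \ref{d 5.3e}: $\mathfrak{L}_\mu \phi_k > f_k$ holds because $\Delta \tilde P > 0$ and the gradient factor is close to $1$, and $|\nabla \phi_k| > Q_k$ on $\mathfrak{F}(\phi_k)$ holds because $|\nabla\phi_k| = |e_n + \epsilon_k \nabla\tilde P| \ge 1 + \epsilon_k \partial_n\tilde P(x_0) - o(\epsilon_k) > 1 + \epsilon_k^2 \ge Q_k$) would, by Lemma \ref{l 5.1}, be forced to stay strictly below $u_k$ where it touches — contradicting the touching after passing to the limit. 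This is the step I expect to be the main obstacle, precisely because the subsolution/supersolution arithmetic for $\mathfrak{L}_\mu$ is more delicate than for the Laplacian: one must track the gradient factor $|e_n + \epsilon_k \nabla(\cdot)|^\mu$ carefully and ensure the free-boundary condition $|\nabla \phi_k| \gtrless Q_k$ is strict with the right sign, using that $Q_k$ is within $\epsilon_k^2$ of $1$.

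Finally, with $\tilde u_\infty$ a viscosity solution of the Neumann problem and $\|\tilde u_\infty\|_\infty \le 1$, $\tilde u_\infty(0) = 0$, Lemma \ref{reg_Laplace} gives a vector $\xi = \nabla \tilde u_\infty(0) \in \mathbb{R}^{n-1}\times\{0\}$ (the $e_n$-component being irrelevant after the reduction) with
\[
|\tilde u_\infty(x) - \xi \cdot x| \le C_0 r^2 \quad \text{in } B_r \cap \{x_n \ge 0\}.
\]
Translating back, $u_k(x) \approx x_n + \epsilon_k \xi\cdot x = \langle x, \nu_k\rangle + O(\epsilon_k r^2)$ with $\nu_k := (e_n + \epsilon_k \xi)/|e_n + \epsilon_k\xi|$, so $|\nu_k - e_n| \le C\epsilon_k$ (in fact the quadratic gain $|\nu_k-e_n|\le C\epsilon_k^2$ comes from $|\xi|$ being bounded and the normalization); choosing $r_0$ so that $C_0 r_0 < 1/4$ and then $k$ large gives
\[
\Big(\langle x,\nu_k\rangle - r\tfrac{\epsilon_k}{2}\Big)^+ \le u_k(x) \le \Big(\langle x,\nu_k\rangle + r\tfrac{\epsilon_k}{2}\Big)^+ \quad \text{in } B_r,
\]
which contradicts the assumption that the conclusion failed for all $u_k$. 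This closes the argument. The only genuinely new ingredients relative to \cite{DeSilva} are the degenerate-operator bookkeeping in Step 2 and the reliance on Corollary \ref{c 5.1} (itself built on the $\mu$-adapted Harnack inequality, Lemma \ref{delta - Harnack inequality}) for the compactness in Step 1.
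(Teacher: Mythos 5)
Your proposal follows essentially the same route as the paper: a contradiction/compactness argument in which $v_k=(u_k-x_n)/\epsilon_k$ is shown to be equicontinuous via Corollary \ref{c 5.1}, the limit is identified as a viscosity solution of the Neumann problem \eqref{const} (with the degenerate factor $|e_n+\epsilon_k\nabla P|^{\mu}\to 1$ handled exactly as you describe), and Lemma \ref{reg_Laplace} plus the choice $C_0r_0\le \frac14$ yields the improved flatness and the contradiction. The only cosmetic difference is at the boundary touching point, where you invoke the comparison Lemma \ref{l 5.1} with a strict subsolution while the paper applies the free boundary viscosity condition (F2) directly (after using $\Delta P>0$ to force the contact point onto $\mathfrak{F}(u_k)$); this is the same mechanism, and note that, as in the paper's own proof, the displacement $|\nu-e_n|$ one actually obtains this way is of order $C\epsilon$ rather than the stated $C\epsilon^{2}$.
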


\begin{proof}
We divide the proof of this Lemma into $3$ steps. We use the following notation:
$$
    \Omega_{\rho}(u) := (B^{+}_{1}(u) \cup \mathfrak{F}(u)) \cap B_{\rho}.
$$

{\bf Step 1 - Compactness Lemma:}\, Fix $r \le r_0$ with $r_0$ universal (the precise $r_0$ will be given in Step 3). Assume by contradiction that we can find a sequence $\epsilon_{k} \to 0$ and a sequence  $\{u_{k}\}_{k \ge 1} \subset C(\Omega)$ be a sequence of viscosity solution to
\begin{equation}\label{seq}
\left\{
\begin{array}{rcl}
\mathcal{L}_{\mu}u_k &=& f_{k}  \quad \quad \quad \mbox{in} \,\,\,\,\, \Omega^{+}_{1}(u_{k})\\
|\nabla u_{k}|&=& Q_{k}(x)  \quad \,\, \mbox{on} \quad \mathfrak{F}(u_{k}) 
\end{array}
\right.
\end{equation}
with 
\begin{equation}\label{condk}
\max \left\{ \|f_{k}\|_{L^{\infty}}, \,\, \|Q_{k}-1\|_{L^{\infty}} \right\} \le \epsilon^{2}_{k},
\end{equation}
as $k \to \infty$, such that
\begin{equation}\label{(4.3)}
    (x_n - \epsilon_{k})^{+} \le u_{k}(x) \le (x_n + \epsilon_{k})^{+} \quad \textrm{for} \,\, x \in B_{1}, \,\, 0 \in \mathfrak{F}(u_k)
\end{equation}
but it does not satisfy the conclusion \eqref{(4.2)} of the Lemma. Let $v_{k} : \Omega_{1}(u_k) \rightarrow \mathds{R}$ defined by
$$
    v_{k}(x) := \frac{u_{k}(x)-x_n}{\epsilon_{k}}.
$$
Then \eqref{(4.3)} gives,
\begin{equation}\label{(4.4)}
    -1 \le v_{k}(x) \le 1 \quad \textrm{for} \,\, x \in \Omega_{1}(u_k).
\end{equation}
From Corollary \ref{c 5.1}, it follows that the function $v_k$ satisfies
\begin{equation}\label{(4.5)}
    |v_{k}(x)-v_{k}(y)| \le C |x-y|^{\gamma},
\end{equation}
for $C$ universal and
$$
    |x-y| \ge \epsilon_{k} / \bar \epsilon, \,\,\, x,y \in \Omega_{1/2}(u_k).
$$
From \eqref{(4.3)} it clearly follows that $\mathfrak{F}(u_k) \to B_1 \cap \{x_n=0\}$ in the Hausdorff distance. This fact and \eqref{(4.5)} together with Ascoli-Arzela give that as $\epsilon_k \to 0$ the graphs of the $v_k$ over $\Omega_{1/2}(u_k)$ converge(up to a subsequence) in the Hausdorff distance to the graph of a H\"older continuous function $u_{\infty}$ over $B_{1/2} \cap \{x_n \ge 0\}$. \\ {\bf Step 2 - Limiting Solution:} We claim that $\tilde{u}$ is a solution of the problem
\begin{eqnarray}
\label{limiting theor}
\left \{
\begin{array}{lll}
\Delta u_{\infty} = 0 \ \ \mbox{in} \ \ B^+_{\frac{1}{2}} \\
\partial_{n} u_{\infty} = 0 \ \ \mbox{on} \ \ \Upsilon_{1/2} \\
\end{array}
\right.
\end{eqnarray}
in viscosity sense. In fact, given a quadratic polynomial $P \left( x \right)$ touching $\tilde{u}$ at $x_{0} \in B_{\frac{1}{2}}\left( 0 \right)  \cap \left\lbrace  x_{n} \geq 0 \right\rbrace$ strictly by below we need to prove that
\begin{enumerate}
	\item[(i)] If $x_{0} \in \ B_{\frac{1}{2}}\left( 0 \right) \cap \left\lbrace  x_{n} > 0 \right\rbrace$ then $\Delta P \leq 0$;
	\item[(ii)] If $x_{0} \in \ B_{\frac{1}{2}}\left( 0 \right) \cap \left\lbrace  x_{n} = 0 \right\rbrace$ then $\partial_{n} P\left( x_{0} \right) \leq 0$.
\end{enumerate}

As in \cite{DeSilva}, there exist points $x_{j} \in \Omega_{\frac{1}{2}}\left( u_{j}\right)$, $x_{j} \rightarrow x_{0}$, and constants $c_{j} \rightarrow 0$ such that
\begin{eqnarray*}
u_{j}\left( x_{j}\right) = \tilde{P}\left( x_{j} \right) 
\end{eqnarray*}
and
\begin{eqnarray*}
u_{j}\left( x\right)\geq \tilde{P}\left( x \right) \ \ \mbox{in a neighborhood of} \ x_{j}   
\end{eqnarray*}
where 
\begin{eqnarray*}
\tilde{P}\left( x \right) = \varepsilon_{j}\left( P\left( x \right) + c_{j}  \right) + x_{n}.   
\end{eqnarray*} 
We have two possibilities:\\

\noindent (a) If $x_{0} \in B_{\frac{1}{2}} \cap \left\lbrace  x_{n} > 0 \right\rbrace$ then, since $P$ touches $u_{j}$ by below at $x_{j}$, we estimate

\begin{eqnarray*} 
\varepsilon_{j}^{2} & \geq&  f_{j}\left( x_{j}\right)\\ \nonumber
& \geq & \mathcal{L}_{\mu} \tilde{P} \\ \nonumber
& =  & \varepsilon^{}_{j} \vert \nabla \tilde{P} \vert^{\mu} \Delta \tilde{P} .
\end{eqnarray*}
Using that $\nabla \tilde{P} = \varepsilon_{j} \nabla P + e_{n}$ and taking $\varepsilon_{j} \longrightarrow 0$ we obtain
\begin{eqnarray*} 
\Delta P \leq 0.
\end{eqnarray*}

\noindent (b)  If $x_{0} \in \ B_{\frac{1}{2}} \cap \left\lbrace  x_{n} = 0 \right\rbrace$ we can assume, see \cite {DeSilva}, that 
\begin{eqnarray}
\label{strict subharmonic}
\Delta P > 0 
\end{eqnarray}
Notice that for $j$ sufficiently large we have $x_{j} \in \mathfrak{F}\left( u_{j}\right)$. In fact, suppose by contradiction that there exists a subsequence $x_{j_{n}} \in B^{+}_{1}\left( u_{j_{n}}\right)$ such that $x_{j_{n}} \rightarrow x_{0}$. Then arguing as in (i) we obtain
\begin{eqnarray*} 
\Delta P \leq  C\varepsilon_{j}, 
\end{eqnarray*}
which contradicts (\ref{strict subharmonic}) as $j_{n} \rightarrow \infty$. Therefore, there exists $j_{0} \in \mathbb{N}$ such that $x_{j} \in \mathfrak{F}\left( u_{j}\right)$ for $j \geq j_{0}$. 
Moreover, 
\begin{eqnarray*} 
\vert \nabla \tilde{P} \vert \geq 1 - \varepsilon_{j}\vert \nabla P\vert > 0, 
\end{eqnarray*}
for $j$ sufficiently large (we can assume that $j \geq j_{0}$). Since that $\tilde{P}^{+}$ touches $u_{j}$ by below we have
\begin{eqnarray*} 
\vert \nabla \tilde{P} \vert ^{2} \leq Q_{j}\left( x_{j}\right) \leq \left(1 + \varepsilon^{2}_{j} \right)   . 
\end{eqnarray*} 
Then, we obtain
\begin{eqnarray*} 
\vert \nabla \tilde{P} \vert ^{2} \leq \left(1 + \varepsilon^{2}_{j} \right). 
\end{eqnarray*} 
Moreover,
\begin{eqnarray*}
\vert \nabla \tilde{P} \vert ^{2} & = &  \varepsilon^{2}_{j}\vert \nabla P \left( x_{j}\right) \vert^{2} + 1 + 2\varepsilon_{j}\partial_{n}P \left( x_{j}\right),
\end{eqnarray*}
where we have used  $\vert \nabla \tilde {P} \vert^{2} \leq C$.
In conclusion, we obtain
\begin{eqnarray}
\label{t 5.3.4} 
\varepsilon^{2}_{j}\vert \nabla P \left( x_{j}\right) \vert^{2} + 1 + 2\varepsilon_{j}\partial_{n}P \left( x_{j}\right) \leq 1 + \varepsilon^{2}_{j}. 
\end{eqnarray}
Hence, dividing (\ref{t 5.3.4}) by $\varepsilon_{j}$ and taking $j \rightarrow \infty$ we obtain $\partial_{n}P\left( x_{0}\right) \leq 0$.   \\ 

The choice of $r_{0}$ and the conclusion of the Theorem \ref{th 5.1. introd.} follows from the regularity of $\tilde{u}$:\\
{\bf Step 3 - Improvement of flatness:}\, From the previous step, $u_{\infty}$ solve \eqref{limiting theor} and from \eqref{(4.4)},
$$
    -1 \le u_{\infty} \le 1 \quad \textrm{in} \,\, B_{1/2} \cap \{x_n \ge 0\}.
$$
From Lemma \ref{reg_Laplace} and the bound above we obtain that, for the given $r$,
$$
    |u_{\infty}(x) - u_{\infty}(0) - \langle \nabla u_{\infty}(0) , x \rangle| \le C_0 r^2 \quad \textrm{in} \,\, B_r \cap \{x_n \ge 0\},
$$
for a universal constant $C_0$. In particular, since $0 \in \mathfrak{F}(u_{\infty})$ and $\frac{\partial u_{\infty}(0)}{\partial \mu} =0$, we estimate
$$
   \langle \tilde{x} , \tilde{\nu} \rangle - C_1 r^2 \le u_{\infty}(x) \le \langle \tilde{x} , \tilde{\nu}\rangle + C_0 r^2 \quad \textrm{in} \,\, B_r \cap \{x_n \ge 0\},
$$
where $\tilde{\nu}_{i} = \langle \nabla u_{\infty}(0) , e_i \rangle$, $i=1, \ldots, n-1$, $|\tilde{\nu}| \le \tilde{C}$ and $\tilde{C}$ is a universal constant. Therefore, for $k$ large enough we get,
$$
    \langle \tilde{x} , \tilde{\nu} \rangle - C_1 r^2 \le v_{k}(x) \le \langle \tilde{x} , \tilde{\nu} \rangle + C_1 r^2 \quad \textrm{in} \,\, \Omega_{r}(u_k).
$$
From the definition of $v_{k}$ the inequality above reads
\begin{equation}\label{(4.9)}
    \epsilon_{k} \tilde{x} \cdot \tilde{\nu} + x_n - \epsilon_{k} C_1 r^2 \le u_k \le  \epsilon_{k} \langle \tilde{x} , \tilde{\nu} \rangle + x_n + \epsilon_{k} C_1 r^2 \quad \textrm{in} \,\, \Omega_{r}(u_k).
\end{equation}
Define 
$$
    \nu := \frac{1}{\sqrt{1+ \epsilon^{2}_{k}}}(\epsilon_{k} \tilde{\nu}, 1).
$$
Since, for $k$ large,
$$
 1 \le \sqrt{1+\epsilon^{2}_{k}} \le 1+ \frac{\epsilon^{2}_{k}}{2},
$$
we conclude from \eqref{(4.9)} that
$$
\langle x , \nu \rangle - \frac{\epsilon^{2}_{k}}{2} r - C_1 r^2 \epsilon_{k} \le u_{k} \le \langle x , \nu \rangle + \frac{\epsilon^{2}_{k}}{2} r + C_1 r^2 \epsilon_k \quad \textrm{in}\,\,\, \Omega_{r}(u_k).
$$
In particular, if $r_0$ is such that $C_1r_0 \le \frac{1}{4}$ and also $k$ is large enough so that $\epsilon_{k} \le \frac{1}{2}$ we find
$$
    \langle x , \nu \rangle - \frac{\epsilon_{k}}{2} r \le u_k \le \langle x , \nu \rangle + \frac{\epsilon_{k}}{2} r \quad \textrm{in} \,\, \Omega_{r}(u_k),
$$
which together with \eqref{(4.3)} implies that
$$
    \left(\langle x , \nu \rangle - \frac{\epsilon_{k}}{2} r\right)^{+} \le u_k \le  \left(\langle x , \nu \rangle + \frac{\epsilon_{k}}{2} r\right)^{+} \quad \textrm{in}\,\,\, B_r.
$$
Thus the $u_k$ satisfy the conclusion of the Lemma, and we reached a contradiction.
\end{proof}


\section{Regularity of the free boundary}\label{reg_fron_livre}
In this section we will prove the Theorem \ref{th 5.1. introd.} and via a blow-up from Theorem \ref{th 5.1. introd.} we will present the proof of Theorem \ref{Holder1}. The proof of Theorem \ref{th 5.1. introd.} is based on flatness improvement coming from Harnack type estimates and it follows closely the work of \cite{DeSilva}. Hereafter, we will assume 
\begin{equation} \label{H}
	|Q(x)-Q(y)| \le \tau(|x-y |) \quad \textrm{for} \,\,\, x,y \in B_1,
\end{equation}
where the modulus of continuity $\tau$ satisfies
 \begin{equation}\label{modulo4}
 	\tau(t) \lesssim Ct^{\beta},
 \end{equation}
 for some $0 < \beta <1$ and $C>0$.

 
\begin{proof}[Proof of Theorem \ref{th 5.1. introd.}]
 The idea of proof is to iterate the Theorem \ref{lemma2N} in the appropriate geometric scaling. Let $u$ be a viscosity solution to the free boundary problem 
\begin{eqnarray}
 \label{defviscsol}
	\left \{ 
		\begin{array}{lll}
			\mathfrak{L}_{\mu} u  =  f, & \text{ in } & B^+_1(u),\\
			|\nabla u| = Q, & \text{ on } & \mathfrak{F}(u). 
		\end{array}
	\right.
\end{eqnarray}
where $B^+_1(u) = \{x \in B^+_1 : u(x) >0\}$ and $ \mathfrak{F}^+(u) := \partial B^{+}_{1}(u) \cap B_1$.  Let us fix $\bar r >0$ to be a universal constant such that
\begin{equation} \label{smallRN}
	\overline{r}^{\beta} \le \min \left\{ \left(\frac{1}{2}\right)^{2},  r_0 \right\},
\end{equation}
with $r_0$ the universal constant in Theorem \ref{lemma2N}.  For the chose $\overline{r}$, let $\epsilon_{0} := \epsilon_{0}(\overline{r})$ give by Theorem \ref{lemma2N}.  Now, let 
\begin{equation}\label{choiceN}
	\overline{\epsilon}:= \epsilon^{2}_{0} \quad \textrm{and} \quad \epsilon = \epsilon_{k} := 2^{-k} \epsilon_0.
\end{equation}
Our choice of $\overline{\epsilon}$ guarantees that  
\begin{equation} \label{Gu}
	(x_n -  \epsilon_0)^{+} \le u(x) \le (x_n +  \epsilon_0)^{+} \quad \textrm{in} \,\,\, B_1.
\end{equation}
Thus by Theorem \ref{lemma2N} 
$$
	\left(\langle x , \nu_{1} \rangle - \bar r \frac{\epsilon_0}{2}\right)^{+} \le u(x) \le \left(\langle x , \nu_{1}\rangle + \bar r \frac{\epsilon_0}{2}\right)^{+} \quad \textrm{in} \,\,\, B_{\bar r},
$$
with $|\nu_{1}|=1$ and $|\nu_{1}-\nu_{0}| \le C{ \epsilon_0}^{2}$ (where $\nu_{0}=e_n$). \\
{\bf Smallness regime:}  Consider the sequence of rescalings $u_{k}: B_1 \rightarrow \mathds{R}$
$$
    u_{k}(x):= \frac{u(\lambda_{k}x)}{\lambda_{k}}
$$
with $\lambda_{k} = \overline{r}^{k}$, $k=0,1,2,\ldots$, for a fixed $\overline{r}$  as in \eqref{smallRN}.
Then each $u_{k}$ satisfies in the following free boundary problem
\begin{eqnarray}
 \label{}
	\left \{ 
		\begin{array}{lll}
			\mathfrak{L}_{\mu}  u_{k}  =  f_{k}, & \text{ in } & B^+_1(u_{k}),\\
			|\nabla u_{k}| = Q_{k}, & \text{ on } & \mathfrak{F}(u_{k}). 
		\end{array}
	\right.
\end{eqnarray}
$$
     f_{k}(x):= \lambda_{k}f(\lambda_{k}x) \quad \textrm{and} \quad Q_{k}(x) := Q(\lambda_k x).
$$
 We claim that for the choices made in \eqref{choiceN} the assumption \eqref{(4.0.2N)} are holds. Indeed, in $B_1$
\begin{eqnarray*}
  |f_{k}(x)| &\le& \|f\|_{L^{\infty}} \lambda_k \le \overline{\epsilon} {\bar r}^k  \le \epsilon^{2}_0 2^{-2k} =(\epsilon_0 2^{-k} )^{2} = \epsilon_{k}^{2},\\
|Q_k(x) -1| &=& |Q(\lambda_k x) - Q_k(0)| \le \tau(1) \lambda^{\beta}_{k}  \le \overline{\epsilon} {\bar r}^ {k \beta}  \le (\epsilon_0 2^{-k})^{2} = \epsilon_{k}^{2}
\end{eqnarray*}
Therefore, we can iterate the argument above and obtain that 
\begin{equation}\label{interativo}
    ( \langle x , \nu_{k} \rangle -\epsilon_{k})^{+} \le u_{k}(x) \le (\langle x , \nu_{k} \rangle + \epsilon_{k})^{+} \quad \textrm{in} \,\,\, B_1,
\end{equation}
with $|\nu_{k}|=1$, $|\nu_{k}-\nu_{k+1}| \le C \epsilon_{k}$ ($\nu_{0}=e_n$), where $C$ is a universal constant. Thus, we have
\begin{equation} \label{int2}
	\left( \langle x , \nu_k \rangle - \frac{\epsilon_0}{2^k} \overline{r}^{k}\right)^{+} \le u(x) \le \left(\langle  x , \nu_k \rangle  + \frac{\epsilon_{0}}{2^{k}} \overline{r}^{k}\right)^{+} \quad \textrm{in} \quad B_{\overline{r}^{k}}
\end{equation}
with 
\begin{equation} \label{int3}
	|\nu_{k+1} - \nu_{k}| \le C \frac{\epsilon_0}{2^k}.
\end{equation}
Which \eqref{int2} implies that
\begin{equation} \label{int5}
	\partial \{u>0\} \cap B_{\overline{r}^k} \subset \left\{|\langle x, \nu_k \rangle| \le \frac{\epsilon_0}{2^k} \overline{r}^{k}\right\}
\end{equation}
This implies that $B_{3/4} \cap \mathfrak{F}(u) $ is a $C^{1,\gamma}$ graph. In fact, by \eqref{int3} we have that $\{\nu_k\}_{k \ge 1}$ is a Cauchy sequence, therefore the limit
$$
	\nu(0) := \lim_{k \to \infty} \nu_k
$$
exists. Yet from \eqref{int3} we conclude
$$
	|\nu_k - \nu(0)| \le C \frac{\epsilon_0}{2^k}.
$$
From \eqref{int5} we have
\begin{equation} \label{int4}
	|\langle x , \nu_{k}\rangle| \le \frac{\epsilon_0}{2^k} \overline{r}^k.
\end{equation}
Fix $x \in B_{3/4} \cap \partial \{u >0\}$ and choose $k$  such that
$$
	\overline{r}^{k+1} \le |x| \le \overline{r}^k.
$$
Then
\begin{eqnarray*}
	|\langle x , \nu(0) \rangle| &\le& |\langle x, \nu(0) - \nu_k \rangle| + |\langle x , \nu_k\rangle| \\
	&\le& |\nu(0) - \nu_k| |x| + \frac{\epsilon_0}{2^k} \overline{r}^k\\
	&\le& C\frac{\epsilon_0}{2^k} |x| + \frac{\epsilon_0}{2^k} \overline{r}^k\\
	&\le& C\frac{\epsilon_0}{2^k} (|x| + \overline{r}^k) \\
	&\le& C \frac{\epsilon_0}{2^k} (|x| + \frac{\overline{r}^{k+1}}{\overline{r}})\\
	&\le& C \frac{\epsilon_0}{2^k} (1 + \frac{1}{\overline{r}}) |x|.
	\end{eqnarray*}
	From the convenient choice of $k$, we have  $|x| \ge \overline{r}^{k+1}$. Hence, if we define $0 < \gamma <1$ such that
	$$
		\frac{1}{2}^{} = \overline{r}^{\gamma}
	$$
	i.e, define  $\gamma := \frac{\ln(2)}{\ln(\overline{r}^{-1})}$. Thus, we have
	\begin{eqnarray*}
		|\langle x , \nu(0)\rangle| &\le& C (\frac{1}{2})^k (1 + \overline{r}^{-1}) |x| \\
		&=& C (\frac{1}{2})^{k+1} (1 + \overline{r}^{-1}) 2 |x| \\
		&\le& C (1+\overline{r}^{-1}) \epsilon_0 |x|^{1+\gamma} \le C \epsilon_0 |x|^{1+\gamma}.
	\end{eqnarray*}
Finally, we obtain
$$
	\partial\{u >0\} \cap B_{\overline{r}^{k}} \subset \left\{ \langle x, \nu(0) \rangle \le C \epsilon_0 \overline{r}^{k(1+ \gamma)}\right\},
$$
which implies that $\partial \{u>0\}$ is a differentiable surface at $0$ with normal $\nu(0)$.  Applying this argument at all points in $\partial\{u>0\} \cap B_{3/4}$ we see that $\partial \{u>0\} \cap B_{3/4}$ is in fact a $C^{1,\gamma}$ surface.
\end{proof}


The next lemma proof of a standard result that is Lipschitz continuity and non-degeneracy of a solution $u$ to 
\begin{eqnarray}
\label{NN}
	\left \{ 
		\begin{array}{lll}
			\mathfrak{L}_{\mu} u  =  f, & \text{ in } & \Omega_{+}\left( u \right),\\
			|\nabla u| = Q, & \text{ on } & \mathfrak{F}(u). 
		\end{array}
	\right.
\end{eqnarray}

\begin{lemma}\label{nao-deg}
Let $u \in C(\Omega)$ be a viscosity solution to \eqref{NN}. Given $\epsilon \in (0,1)$, we can find a universal constant $\tilde{\epsilon}$ such tha if $\epsilon \in (0,  \tilde{\epsilon}]$, $\mathfrak{F}(u) \cap B_1 \not= \varnothing$, $\mathfrak{F}(u)$ is a Lipschitz graph in $B_2$ and
\begin{equation} \label{flu}
 \max \left\{ \|f\|_{L^{\infty}(\Omega)},\,\, \|Q-1\|_{L^{\infty}(\Omega)} \right\} \le \epsilon^{2},
\end{equation}
then $u$ is Lipschitz and non-degenerate in $B^{+}_{1}(u)$ i.e. there exists universal conconstants $c_0,c_1 >0$
$$
    c_0 \textrm{dist}(z,\mathfrak{F}(u)) \le u(z) \le c_1 \textrm{dist}(z,\mathfrak{F}(u)) \quad \textrm{for all } \,\, z \in B^{+}_{1}(u).
$$
\end{lemma}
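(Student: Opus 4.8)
The plan is to prove the two inequalities separately: Lipschitz continuity (the upper bound $u(z) \le c_1 \operatorname{dist}(z,\mathfrak{F}(u))$) and non-degeneracy (the lower bound $c_0 \operatorname{dist}(z,\mathfrak{F}(u)) \le u(z)$). For both, the key auxiliary objects are the interior estimates for the equation $\mathfrak{L}_\mu u = f$ in balls contained in $B^+_1(u)$, together with the interior Harnack inequality for this operator (available in dimension $n=2$ by \cite{BD}, as noted in the introduction; in higher dimensions one instead invokes \cite{Imbert Harnack}-type estimates for $\mathfrak{L}_\mu$ applied to $v = u + $ linear, but for the statement at hand we follow the same template as \cite{DeSilva}, Lemma 4.3). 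Throughout, fix $z \in B^+_1(u)$ and set $d := \operatorname{dist}(z,\mathfrak{F}(u))$, so $B_d(z) \subset B^+_1(u)$ and $u > 0$ there.

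\emph{Upper bound (Lipschitz).} First I would reduce to showing $u(z) \le c_1 d$ when $B_d(z) \subset B_1$. Inside $B_d(z)$ the function $u$ solves $\mathfrak{L}_\mu u = f$ with $\|f\|_\infty \le \epsilon^2 \le 1$; by the $C^{1,\alpha}$ interior estimates of \cite{IS} (used already in the proof of Lemma \ref{Harnack lemma 5.2}) we get $\|\nabla u\|_{L^\infty(B_{d/2}(z))} \le C\big(d^{-1}\|u\|_{L^\infty(B_d(z))} + \|f\|_\infty^{1/(\mu+1)}\big)$. To turn this into a bound on $u(z)/d$ one uses the flatness/Lipschitz-graph hypothesis on $\mathfrak{F}(u)$: since $\mathfrak{F}(u)$ is a Lipschitz graph in $B_2$, there is a point $y_0 \in \mathfrak{F}(u)$ with $|z - y_0| = d$ and a full interior cone/ball at $y_0$ on the zero side, so by the comparison with the linear barrier argument (build a supersolution of $\mathfrak{L}_\mu$ that vanishes on a hyperplane through $y_0$ and dominates $u$ on $\partial B_{Cd}(y_0)$, exactly as in \cite{DeSilva}) we obtain $\|u\|_{L^\infty(B_d(z))} \le C d$. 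Combining these gives $u(z) \le \|u\|_{L^\infty(B_{d/2}(z))} \le \|u\|_{L^\infty(B_d(z))} \le c_1 d$, which is the upper bound.

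\emph{Lower bound (non-degeneracy).} For this I would argue at a free boundary point: pick $y_0 \in \mathfrak{F}(u) \cap \partial B_d(z)$ and show $u(z) \ge c_0 d$. The free boundary condition $|\nabla u| = Q \ge 1 - \epsilon^2 \ge 1/2$ on $\mathfrak{F}(u)$, read in the viscosity sense (F2), prevents $u$ from decaying faster than linearly: if $u(z) < c_0 d$ for a small universal $c_0$, then on $B_{d/2}(z)$ the Harnack inequality (or, in the degenerate setting, the Harnack-type estimate of Lemma \ref{delta - Harnack inequality} applied after the scaling $u_d(x) = u(dx + z)/d$, which solves the same equation with right-hand side $d f(d\,\cdot) $, $\|d f(d\cdot)\|_\infty \le d \epsilon^2 \le 1$) forces $\sup_{B_{d/2}(z)} u \le C(\inf_{B_{d/2}(z)} u + 2d) \le C' c_0 d$; one then slides a linear barrier $\ell(x) = (1-\eta)\langle x - y_0, -\nu\rangle^+$ with small gradient $(1-\eta) < Q(y_0)$ up from below against $u$ near $y_0$, and because $\sup u$ near $y_0$ is too small the barrier's zero level must cross into $\{u > 0\}$, producing a touching point on $\mathfrak{F}(u)$ where $|\nabla \ell| < Q$ contradicts (F2) — unless $c_0$ was not small, i.e. $u(z) \ge c_0 d$. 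Finally, for a general $z \in B^+_1(u)$ with $d = \operatorname{dist}(z, \mathfrak{F}(u))$, connect $z$ to a near-boundary point along the segment toward the nearest free boundary point and chain the interior Harnack inequality a universal number of times (depending on the Lipschitz constant of $\mathfrak{F}(u)$) to transfer the lower bound from the near-boundary scale to $z$; this is where the Lipschitz-graph hypothesis is essential, since it guarantees the Harnack chain stays in $B^+_1(u)$ with a controlled number of balls.

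\emph{Main obstacle.} The delicate point is the non-degeneracy lower bound: unlike the case $\mu = 0$, one cannot freely add linear functions to solutions of $\mathfrak{L}_\mu v = f$, so the barrier $\ell$ above is not itself a solution of the interior equation and the sliding argument must be run with the comparison principle Lemma \ref{l 5.1} for the \emph{free boundary} problem rather than for the PDE alone; one must check that the linear barrier, truncated at its positive part, is a genuine strict subsolution in the sense of Definition \ref{d 5.3e} (its gradient has modulus $1-\eta$, bounded away from $0$, so $|\ell + e|^\mu$ and hence $\mathfrak{L}_\mu \ell$ is under control, and $\mathfrak{L}_\mu \ell = 0 < f$ is false — so one actually perturbs $\ell$ to a slightly concave radial barrier as in Case 1 of Lemma \ref{Harnack lemma 5.2} to make $\mathfrak{L}_\mu < f$ strict). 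Making this perturbation compatible simultaneously with the gradient condition $|\nabla(\text{barrier})| < Q$ on its free boundary and with the smallness of $\|f\|_\infty$ is the technical heart of the argument, but it is exactly the mechanism already deployed in Lemma \ref{Harnack lemma 5.2}, so it carries over.
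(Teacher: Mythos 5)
The paper actually gives no proof of Lemma \ref{nao-deg}: it is declared a standard result, and the introduction explains that it is to be obtained ``as in \cite{DeSilva}'' from a Harnack inequality and the maximum principle for $\mathfrak{L}_{\mu}v=f$, which is exactly why Theorem \ref{Holder1} is restricted to $n=2$, where such a Harnack inequality is available (\cite{BD}). Measured against that intended route, your proposal has a genuine gap at its central quantitative step, the non-degeneracy. You apply Lemma \ref{delta - Harnack inequality} to $u_d(x)=u(dx+z)/d$ and conclude $\sup_{B_{d/2}(z)}u\le C\bigl(\inf_{B_{d/2}(z)}u+2d\bigr)\le C'c_0d$. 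The last inequality is false: the additive term in that lemma is of the order of the \emph{radius} (it comes from the gradient threshold $\max(2,\|f\|_\infty)$ in Imbert's estimate, not from $\|f\|_\infty$), so what you actually get is $\sup_{B_{d/2}(z)}u\le C(c_0+2)d\approx 2Cd$. The smallness of $u(z)$ is therefore not propagated, neither across $B_{d/2}(z)$ nor along your Harnack chain toward $y_0$, and the sliding barrier you then use has gradient of order one, giving no contradiction with $|\nabla\phi|\ge Q\ge 1/2$ in (F2). This additive error is precisely the difficulty the paper isolates in Section 2.2; it is tolerable in Lemma \ref{Harnack lemma 5.2}, where Harnack is only used at a fixed universal scale $r_3$, but not here, where a bound of size $c_0d\ll d$ must be transported. (Note also that Lemma \ref{delta - Harnack inequality} concerns the shifted equation $|\nabla v+e|^{\mu}\Delta v=f$, while $u_d$ solves the unshifted one.) The fix is the one the paper points to: use the genuine Harnack inequality for $\mathfrak{L}_{\mu}v=f$ with error controlled by $\|f\|_\infty$, i.e. the $n=2$ result of \cite{BD} (or classical Harnack where $|\nabla u|$ is bounded below, as in Case 2 of Lemma \ref{Harnack lemma 5.2}); your argument mentions \cite{BD} in passing but never uses it where it is actually needed.

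Your Lipschitz step is also not right as described: a linear supersolution vanishing on a hyperplane through (or below) $y_0$ can only be made to dominate $u$ on $\partial B_{Cd}(y_0)$ if one already knows that $u$ grows at most linearly away from $\mathfrak{F}(u)$ there, which is the estimate being proved; the only a priori bound, $\|u\|_{L^\infty(B_2)}$, forces a slope of order $\|u\|_\infty/d$ and yields nothing. The non-circular argument (De Silva's, and the one the paper has in mind) is by contradiction: if $u(z)>Cd$, interior Harnack in $B_d(z)$ gives $u\gtrsim Cd$ on $\partial B_{d/2}(z)$ -- and here the crude Harnack with additive error $O(d)$ \emph{is} sufficient, since one propagates largeness rather than smallness -- and then a radial barrier of the type \eqref{l 5.2.6} in the annulus $B_d(z)\setminus B_{d/2}(z)$, adjusted as in Case 1 of Lemma \ref{Harnack lemma 5.2} so that it stays below $u$, touches $u$ from below at $y_0\in\mathfrak{F}(u)$ with gradient $\gtrsim C$, contradicting the first half of (F2) for $C$ large; your use of the interior $C^{1,\alpha}$ estimate of \cite{IS} then upgrades $u\le C\,\mathrm{dist}$ to a Lipschitz bound. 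Your ``main obstacle'' paragraph correctly flags the need to perturb linear barriers into strict sub/supersolutions for $\mathfrak{L}_{\mu}$, but the quantitative Harnack issue above, and with it the $n=2$ restriction, is the real heart of the lemma and is missing from your argument.
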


\begin{lemma}[Compactness] \label{compactness}
Let $u_k$ be a sequence of (Lipschitz) viscosity solutions to
$$
\left\{
\begin{array}{rcl}
\mathfrak{L}_{\mu} u_k &=& f_k \quad  \mbox{in} \quad \Omega^{+}(u_k) ,\\
|\nabla u_k | &=&Q_k \quad \mbox{on} \quad \mathfrak{F}(u_k)
\end{array}
\right.
$$
where $f_k$ and $Q_k$ satisfies the assumption \eqref{flu}. Assume that
\begin{enumerate}
\item[(i)]
$
    u_{k} \to u_{\infty} \quad \text{uniformly on compacts};
$
\item[(ii)]
$
    \partial \{u_{k} >0\} \to \partial \{u_{\infty} >0\} \quad \textrm{locally in the Hausdorff distance;}
$
\item[(v)] $\|f_k\|_{L^{\infty}} + \|Q_k-1\|_{L^{\infty}} = o(1)$, as $k \to \infty$
\end{enumerate}
Then $u_{\infty}$ be a viscosity solution of
$$
\left\{
\begin{array}{rcl}
\Delta  u_{\infty}  &=& 0,  \quad \mbox{in} \quad \Omega^{+}(u_{\infty}) ,\\
|\nabla u_{\infty}| &=& 1,  \quad \mbox{on} \quad \mathfrak{F}(u_{\infty}),
\end{array}
\right.
$$
 in the viscosity sense.
\end{lemma}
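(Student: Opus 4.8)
The plan is to pass to the limit in the two conditions of Definition~\ref{d 5.3} separately, using the degenerate structure of $\mathfrak{L}_\mu$ together with the uniform convergence and Hausdorff convergence hypotheses. First I would verify condition (F1): let $\phi\in C^2$ touch $u_\infty$ strictly by below at some $x_0\in\Omega^+(u_\infty)$. Since $\partial\{u_k>0\}\to\partial\{u_\infty>0\}$ locally in the Hausdorff distance, for $k$ large $x_0$ lies in a fixed compact subset of $\Omega^+(u_k)$, and by the uniform convergence $u_k\to u_\infty$ there are points $x_k\to x_0$ at which (a slightly shifted) $\phi$ touches $u_k$ by below. Applying the viscosity inequality for $u_k$ gives $|\nabla\phi|^\mu(x_k)\,\Delta\phi(x_k)\le f_k(x_k)$. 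Here I must split into two cases according to whether $\nabla\phi(x_0)=0$ or not. If $\nabla\phi(x_0)\neq 0$, then $|\nabla\phi|^\mu(x_k)\to|\nabla\phi|^\mu(x_0)>0$, and since $f_k(x_k)\to 0$ by hypothesis (v), dividing through yields $\Delta\phi(x_0)\le 0$. If $\nabla\phi(x_0)=0$, one argues as usual that $\Delta\phi(x_0)\le 0$ is forced directly (a $C^2$ function with vanishing gradient and strictly positive Laplacian at a minimum is impossible — so the test function can always be perturbed so this case does not arise, or one notes $\mathscr M^-$ of $D^2\phi$ controls the situation via the Pucci-operator reformulation used in Lemma~\ref{delta - Harnack inequality}). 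The supersolution direction is symmetric.

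Next I would verify the free boundary condition, i.e. that $|\nabla u_\infty|=1$ on $\mathfrak F(u_\infty)$ in the viscosity sense of (F2). Let $\phi\in C^2$ with $\phi^+$ touching $u_\infty$ by below at $x_0\in\mathfrak F(u_\infty)$ with $|\nabla\phi|(x_0)\neq 0$; I want $|\nabla\phi|(x_0)\le 1$. As in De Silva's scheme, the key point is that $\mathfrak F(u_k)\to\mathfrak F(u_\infty)$ in the Hausdorff distance, so one can slide $\phi$ to produce, for each large $k$, a function $\phi+c_k$ with $c_k\to 0$ whose positive part touches $u_k$ by below at a point $x_k\in\mathfrak F(u_k)$ (one uses that near a free boundary point of $u_\infty$ both $\{u_\infty>0\}$ and $\{u_\infty=0\}$ accumulate, together with the Hausdorff convergence, to guarantee the contact point lands exactly on $\mathfrak F(u_k)$ and not in the interior). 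Then condition (F2) for $u_k$ gives $|\nabla\phi|(x_k)\le Q_k(x_k)$, and since $|\nabla\phi|(x_k)\to|\nabla\phi|(x_0)$ and $Q_k(x_k)\to 1$ by hypothesis (v), we obtain $|\nabla\phi|(x_0)\le 1$. The reverse inequality, with $\phi^+$ touching from above, follows the same way using the supersolution half of (F2).

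I expect the main obstacle to be the free boundary condition, specifically the step of showing the contact point $x_k$ can be chosen to lie \emph{on} $\mathfrak F(u_k)$ rather than in the positivity set: this is exactly where the Hausdorff convergence of the free boundaries (hypothesis (ii)) must be combined with a careful geometric argument, and it is the place where De Silva's original proof for $\mu=0$ does real work. In the degenerate setting there is an additional subtlety in the interior equation: one cannot simply divide by $|\nabla\phi|^\mu$ when the test gradient degenerates, so the Pucci/uniformly-elliptic reformulation $\mathscr M^+(D^2\phi)+|f|\ge 0$, $\mathscr M^-(D^2\phi)-|f|\le 0$ (valid whenever the gradient is bounded away from a bad set, as recorded in the proof of Lemma~\ref{delta - Harnack inequality}) is the right tool to pass to the limit uniformly in a neighborhood of a contact point, and I would lean on it to handle the degenerate case cleanly. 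The remaining ingredients — existence of the contact points $x_k$, the limits of $\nabla\phi$, $f_k$, $Q_k$ — are routine consequences of uniform convergence and the smallness hypothesis (v), and the argument closely parallels the corresponding compactness lemma in \cite{DeSilva}.
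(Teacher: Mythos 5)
Your overall scheme (pass to the limit in (F1) and (F2) separately, using the uniform convergence, the Hausdorff convergence of the free boundaries, and $f_k,\,Q_k-1\to 0$) is exactly the scheme the paper invokes, since its proof is just a pointer to Step 2 of Theorem \ref{lemma2N} and to Lemma 7.3 of \cite{DeSilva}. But two of your steps do not hold up as written. For the interior equation with $\nabla\phi(x_0)=0$, neither of your justifications is valid: the remark that ``a $C^2$ function with vanishing gradient and strictly positive Laplacian at a minimum is impossible'' proves nothing here, because the function attaining the minimum is $u_\infty-\phi$ and $u_\infty$ is only continuous; and the Pucci reformulation recorded in the proof of Lemma \ref{delta - Harnack inequality} is available only in the regime $|\vec q|\ge 2$, i.e. when the gradient is bounded \emph{away} from the degenerate set, which is the opposite of the situation you must handle. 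What is needed (and must be argued or correctly cited) is the standard reduction lemma for $|\nabla u|^{\mu}F(D^2u)$-type equations: for the homogeneous limit equation it suffices to test with functions whose gradient does not vanish at the contact point, which requires a genuine perturbation argument, not the one you give. Note that in the paper's Step 2 this issue never arises, because there the test function is $\varepsilon_j(P+c_j)+x_n$, whose gradient is close to $e_n$; in your setting (direct limit $u_k\to u_\infty$) it does arise, so the gap is real.

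For the free boundary condition, you correctly flag the key difficulty --- forcing the contact points $x_k$ onto $\mathfrak F(u_k)$ --- but the mechanism you propose (Hausdorff convergence plus ``both phases accumulate at $x_0$'') cannot deliver it: the slid function $\phi+c_k$ may perfectly well touch $u_k$ first at an interior point of $\{u_k>0\}$ arbitrarily close to $\mathfrak F(u_k)$. The device actually used in the paper (Step 2(b) of Theorem \ref{lemma2N}, made legitimate by Remark \ref{remark}) is analytic rather than geometric: one first reduces to test functions with $\Delta\phi>0$ (resp. $\Delta\phi<0$ for the touching-from-above direction); then, since $|\nabla\phi(x_0)|\neq 0$, in a neighborhood of $x_0$ one has $|\nabla\phi|^{\mu}\Delta\phi\ge c>0\ge\|f_k\|_{L^\infty}$ for $k$ large because $f_k\to 0$, so an interior contact would contradict the interior equation for $u_k$, and the contact point must lie on $\mathfrak F(u_k)$; only at that stage does (F2) for $u_k$ together with $Q_k\to 1$ yield $|\nabla\phi(x_0)|\le 1$ (resp. $\ge 1$). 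With these two repairs your argument coincides with the one the paper intends; without them, both the degenerate-gradient case and the location of the contact points are unproven.
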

\begin{proof}
The proof that follow the same scheme of the model Lemma \ref{lemma2N} (see also \cite{DeSilva} Lemma 7.3). 
\end{proof}

Although not strictly necessary, we use the following Liouville type result for  global viscosity solutions to a one-phase homogeneous free boundary problem, that could be of independent interest. The result is more general, but we will only show the result for a one-phase problems..
\begin{lemma}\label{Liouville}
Let $v: \mathds{R}^{n} \rightarrow \mathds{R}$ be a non-negative viscosity solution to
$$
\left\{
\begin{array}{rcl}
 \Delta v_{} &=& 0,  \quad \mbox{in} \quad \{v>0\} ,\\
\langle \nabla v , \nu \rangle &=&1,  \quad \mbox{on} \quad \mathfrak{F}(v):= \partial \{v>0\}.
\end{array}
\right.
$$
 Assume that $\mathfrak{F}(v) = \{x_n =g(x'), x' \in \mathds{R}^{n-1}\}$ with $\textrm{Lip}(g) \le M$. Then $g$ is linear and
$$
    v(x)= x_{n}^{+} .
$$
\end{lemma}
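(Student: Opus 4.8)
The plan is to prove this Liouville-type rigidity result by a monotonicity/geometric argument followed by a unique continuation step, exactly in the spirit of the classical Caffarelli theory (and De Silva's adaptation). Write $\mathfrak{F}(v) = \{x_n = g(x')\}$ with $\mathrm{Lip}(g) \le M$, so $\{v>0\}$ is the supergraph region $\{x_n > g(x')\}$, where $v$ is harmonic, and $v \equiv 0$ on the subgraph side. The first observation is that the Lipschitz bound on $g$ forces $\{v>0\}$ to contain a fixed cone of directions: there is a cone $\mathcal{C}$ of opening depending only on $M$ such that for every $\tau \in \mathcal{C}$ with $\langle \tau, e_n\rangle > 0$, the region $\{v>0\}$ is invariant under translation by $t\tau$, $t>0$. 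By the standard argument (compare $v(x+t\tau)$ with $v(x)$ using the maximum principle on the harmonic phase, together with the free boundary condition acting as a one-sided constraint — here Lemma \ref{l 5.1} or rather a global comparison principle for harmonic functions with a gradient bound on the free boundary), one gets that $v$ is monotone nondecreasing in every such direction $\tau$. Hence $\partial_\tau v \ge 0$ for all $\tau \in \mathcal{C}$.

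Next I would upgrade this to linearity of $g$. Each directional derivative $w := \partial_\tau v$ is a nonnegative harmonic function in $\{v>0\}$ (differentiating $\Delta v = 0$), which vanishes continuously on $\mathfrak{F}(v)$. I would then run the usual dichotomy: either $w \equiv 0$, or $w > 0$ in the interior by the strong maximum principle. If $\partial_\tau v \equiv 0$ for some interior direction $\tau$ of $\mathcal{C}$, then $v$ depends on fewer variables and an induction on dimension (the one-dimensional base case being $v(t) = t^+$ forced by $\Delta v = 0$ and $|v'| = 1$ at the free boundary) finishes the proof. Otherwise $\partial_\tau v > 0$ strictly for every $\tau$ in the open cone; I would exploit this by considering two boundary directions $\tau_1, \tau_2$ spanning a $2$-plane and forming $w = \partial_{\tau_1} v - c\,\partial_{\tau_2} v$ for the critical constant $c$ making $w$ change sign; $w$ is harmonic, so by the boundary Harnack principle / the Caffarelli argument, $w/\partial_{\tau_2}v$ is constant, which pins down the free boundary normal to be a fixed vector, i.e. $g$ affine. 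After a rotation making $\mathfrak{F}(v) = \{x_n = 0\}$, the problem reduces to: $v$ harmonic and positive in $\{x_n>0\}$, vanishing on $\{x_n=0\}$, with $\partial_n v = 1$ there. Then $v(x) - x_n$ is harmonic in the half-space, vanishes on the boundary together with its normal derivative; reflecting oddly across $\{x_n=0\}$ it extends to an entire harmonic function which, combined with the sublinear/linear growth coming from the Lipschitz flatness, must be identically zero by Liouville, giving $v(x) = x_n^+$.

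The main obstacle I anticipate is the step establishing monotonicity in the cone of directions and, relatedly, the boundary Harnack / constant-ratio argument, because here we must be careful that the free boundary condition $\langle \nabla v, \nu\rangle = 1$ is only a one-phase (one-sided) condition and holds only in the viscosity sense, so the comparison arguments have to be phrased via touching test functions rather than classical derivatives. In particular the translation-comparison needs: if $v(\cdot + t\tau) \ge v$ fails, slide $t$ down to the first contact value and derive a contradiction either at an interior point (strong maximum principle for harmonic functions) or at a free boundary point (using the viscosity free boundary condition and the Lipschitz geometry to see the translated graph cannot touch from the correct side). This is exactly the delicate part of Caffarelli's Lipschitz-implies-$C^{1,\alpha}$ machinery and De Silva's reworking, so I would cite \cite{C1} and \cite{DeSilva} for the precise sliding argument and adapt the (here trivial, since the bulk equation is just $\Delta v = 0$) interior estimates. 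The remaining steps — the dimension reduction, the odd reflection, and the final Liouville argument — are routine once monotonicity and the constant-ratio conclusion are in hand.
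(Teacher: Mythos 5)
Your plan is structurally different from the paper's proof, and its central step contains a genuine gap. The decisive point is your Step 2: you propose to apply the boundary Harnack principle to directional derivatives $\partial_{\tau}v$ and conclude that a ratio such as $(\partial_{\tau_1}v - c\,\partial_{\tau_2}v)/\partial_{\tau_2}v$ is constant, hence the free boundary normal is fixed. Boundary Harnack (and the ensuing Hölder-continuity/constancy of the ratio) requires \emph{both} functions to be positive harmonic and to \emph{vanish} on the Lipschitz boundary. In the one-phase problem the directional derivatives do not vanish on $\mathfrak{F}(v)$: since $|\nabla v|=1$ there, $\partial_{\tau}v$ has boundary values $\langle \nu_x,\tau\rangle$, which vary along the free boundary, so the hypotheses of boundary Harnack are simply not met and the constant-ratio conclusion does not follow. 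Indeed, proving that $\partial_{\tau}v$ stays strictly positive up to the free boundary for directions near the edge of the cone is precisely the hard content of Caffarelli's cone-improvement argument (carried to the free boundary through the condition $|\nabla v|=Q$ via comparison with strict sub/supersolutions), not an input one can extract from boundary Harnack. The preliminary sliding step is also not routine here: you would be comparing two \emph{solutions} $v$ and $v(\cdot+t\tau)$ of the free boundary problem in an unbounded domain, whereas the comparison tool available (Lemma \ref{l 5.1}) only compares a solution with a \emph{strict} comparison sub/supersolution, and in an unbounded Lipschitz-graph domain one additionally needs growth control on $v$ that you do not establish.

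By contrast, the paper's proof bypasses all of this machinery by invoking Caffarelli's local theorem \cite{C1} as a black box: since $v$ solves the classical one-phase problem ($\Delta v=0$, $|\nabla v|=1$) in every ball and $\mathfrak{F}(v)$ is a global Lipschitz graph with constant $M$, the free boundary is $C^{1,\gamma}$ in $B_1$ with an estimate depending only on $n,M$; the problem and the Lipschitz bound are scale invariant, so applying the same estimate to $g_{\lambda}(x')=\lambda^{-1}g(\lambda x')$ yields
$$
|g(y')-g(0)-\nabla g(0)\cdot y'|\le C\,R^{-\gamma}|y'|^{1+\gamma}\quad \text{on } B'_R,
$$
and letting $R\to\infty$ forces $g$ to be linear, after which $v=x_n^{+}$ follows at once. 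If you want to keep your outline, the fix is to replace the boundary Harnack shortcut by the full cone-improvement argument of \cite{C1} applied at every scale — but at that point you are reproving the local theorem, and it is both shorter and cleaner to use its conclusion directly as the paper does. (Your final step is fine, though note that a nonnegative harmonic function on a half-space vanishing on the boundary is automatically a multiple of $x_n$, so no separate growth hypothesis is needed there.)
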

\begin{proof}
Let's follow the ideas of \cite{DeSilva}. Initially, assume for simplicity, $0 \in \mathfrak{F}(v)$. Also, balls (of radius $\rho$ center at $0$) in $\mathds{R}^{n-1}$ are denote by $B^{'}_{\rho}$. By the regularity theory in \cite{C1}, since $v$ is a solution in $B_2$, the free boundary $\mathfrak{F}(v)$ is $C^{1,\gamma}$ in $B_1$ with a bound depending only on $n$ and on $M$. Thus,
$$
    |g(x')-g(0)-\nabla g(0) \cdot x'| \le C |x'|^{1+\gamma} \quad \textrm{for} \,\,\,x^{'}\in B^{'}_{1}
$$
with $C$ depending only on $n$, $M$. Moreover, since $v$ us a global solution, the rescaling
$$
    g_{\lambda}(x'):= \frac{1}{\lambda}g(\lambda x'), \,\, x' \in B^{'}_{2}
$$
which preserves the same Lipschitz constant as $g$, satisfies the same inequality as above, i.e.
$$
    |g_{\lambda}(x')-g_{\lambda}(0)-\nabla g_{\lambda}(0) \cdot x'| \le C |x'|^{1+\gamma}\quad \textrm{for} \,\,\,x^{'}\in B^{'}_{1}.
$$
Thus,
$$
    |g(y')-g(0)-\nabla g(0)\cdot y'| \le C \frac{1}{R^{\gamma}} |y'|^{1+\gamma}, \,\,\, y' \in B^{'}_{R}.
$$
Passing to the limit as $R \to \infty$  we obtain the desired claim.
\end{proof}


Finally we can prove Theorem \ref{Holder1}. In this section we finally the proof of our second main theorem.

\begin{proof}[ Proof of Theorem \ref{Holder1}]
Let $\overline{\epsilon}>0$ be the universal constant in Theorem  \ref{th 5.1. introd.} and $u$. Without loss of generality, assume $Q(0)=1$. Consider the re-scaled function
$$
    u_k := u_{\delta_{k}}(x) = \frac{u(\delta_{k}x)}{\delta_k},
$$
with $\delta_{k} \to 0$ as $k \to \infty$. Each $u_k$ solves
$$
\left\{
\begin{array}{rcl}
\mathfrak{L}_{\mu} u_k&=& f_k \quad  \mbox{in} \quad B^{+}_{1}(u_k),\\
| \nabla u_{k} |&=&Q_k \quad \mbox{on} \quad \mathfrak{F}(u_k),
\end{array}
\right.
$$
with
$$
    f_{k}(x):= \delta_{k}f(\delta_{k}x) \quad \textrm{and} \quad Q_{k}(x) := Q(\delta_k x).
$$
Furthermore, for $k$ large, the assumption \eqref{flu} are satisfied for the universal constant $\bar \epsilon$. In fact, in $B_1$ we have
\begin{eqnarray*}
   |f_{k}(x)| &=& \delta_{k} |f(\delta_{k}x)| \le \delta_{k} \|f\|_{L^{\infty}} \le \overline{\epsilon}^{2}\\
    |Q_{k}(x)-1| &=& |Q_{k}(x)-Q_{k}(0)| \le  \tau(1) \delta^{\beta}_{k} \le \overline{\epsilon}^{2}
\end{eqnarray*}
for $k$ large enough. Therefore, using non-degeneracy (see Lemma \ref{nao-deg}) and uniform Lipschitz continuity of the $u_{k}'s $ (see Lemma \eqref{nao-deg} ), standard arguments imply that (up to a subsequence)
\begin{enumerate}
\item[(i)] There exists $u_{\infty} \in C(\Omega)$ such that
$
    u_{k} \to u_{\infty} \quad \textrm{uniformly on compacts};
$
\item[(ii)]
$
    \partial \{u_{k} >0\} \to \partial \{u_{\infty} >0\} \quad \textrm{locally in the Hausdorff distance;}
$
\item[(iii)] $\|f_k\|_{L^{\infty}} + \|Q_k-1\|_{L^{\infty}} = o(1)$, as $k \to \infty$
\end{enumerate}
and, as in Lemma \ref{compactness}, the blow-up limit $u_{\infty}$ solves the global homogeneous one-phase free boundary problem
 $$
\left\{
\begin{array}{rcl}
\Delta u_{\infty} &=& 0,  \quad \mbox{in} \quad  \{u_{\infty}>0\},\\
|\nabla u_{\infty}|&=&1,  \quad \mbox{on} \quad \mathfrak{F}(u_{\infty}).
\end{array}
\right.
$$
Since $\mathfrak{F}(u)$ is a Lipschitz graph in a neighborhood of $0$ we also have from have $(i)-(iii)$ that  $\mathfrak{F}(u_{\infty})$ is Lipschitz continuous. Thus, follows the Lemma \ref{Liouville}  that $u_{\infty}$ is a so-called one-phase solution, i.e. (up to rotations)
$$
    u_{\infty} = x^{+}_{n}.
$$
Thus, for $k$ large enough we have
$$
    \|u_{k}-u_{\infty}\|_{L^{\infty}} \le \overline{\epsilon}
$$
and the facts thar $u_{k}$ is $\overline{\epsilon}$-flat say in $B_1$ i.e
$$
    (x_n-\overline{\epsilon})^{+} \le u_{k}(x) \le (x_n+\overline{\epsilon})^{+}, \,\,\, x \in B_1.
$$
Therefore, we can apply our flatness Theorem \ref{lemma2N} and conclude that $\mathfrak{F}(u_k)$ and hence $\mathfrak{F}(u)$ is $C^{1,\gamma}$, for some $\gamma \in (0,1)$.
\end{proof}

\subsection*{Acknowledgments}

\hspace{0.65cm} RAL and GCR thanks the Analysis research group of UFC for fostering a pleasant and productive scientific atmosphere.


\end{document}